\documentclass[reqno]{amsart}
%
\usepackage{mathrsfs,amssymb,mathrsfs, multirow,setspace}
\usepackage{amsmath}
\onehalfspacing
\usepackage[a4paper, total={7in, 9in}]{geometry}
\usepackage{enumerate}
\theoremstyle{plain}
\usepackage{graphicx}
\usepackage{mathtools}
\usepackage[utf8]{inputenc}

\newtheorem{theorem}{Theorem}[section]
\newtheorem{lemma}[theorem]{Lemma}
\newtheorem{proposition}[theorem]{Proposition}
\newtheorem{corollary}[theorem]{Corollary}

\theoremstyle{definition}

\newtheorem{example}[theorem]{Example}

\theoremstyle{remark}

\input xy
\xyoption{all}

\renewcommand{\Bbb}{\mathbb} 

\begin{document}
	
	\title[Sombor index and eigenvalues of  weakly zero-divisor graph of commutative rings]{Sombor index and eigenvalues of weakly zero-divisor graph of commutative rings}
  \author[Mohd Shariq, Jitender Kumar*]{ Mohd Shariq, Jitender Kumar*}
   \address{Department of Mathematics, Birla Institute of Technology and Science Pilani, Pilani-333031, India}
  \email{shariqamu90@gmail.com,  jitenderarora09@gmail.com}


\subjclass[2020]{05C25, 05C50}
\keywords{Sombor matrix, Sombor index, Sombor spectrum, Weakly zero-divisor graph, eigenvalues, ring of integers modulo $n$,\\ *  Corresponding author}
\maketitle



 \begin{abstract}
      The weakly zero-divisor graph $W\Gamma(R)$ of a commutative ring $R$ is the simple undirected graph whose vertices are nonzero zero-divisors of $R$ and two distinct vertices $x$, $y$ are adjacent if and only if there exist $w\in {\rm ann}(x)$ and $ z\in {\rm ann}(y)$ such that $wz =0$. In this paper, we determine the Sombor index for the weakly zero-divisor graph of the integers modulo ring $\mathbb{Z}_n$. Furthermore, we investigate the Sombor spectrum and establish bounds for the Sombor energy of the weakly zero-divisor graph of $\mathbb{Z}_n$.
   \end{abstract}

 \section{Introduction} 
 Let $G$ be a simple graph with vertex set $ V(G)$ and edge set $ E(G)$. The number of vertices and edges in $G$ are called the \emph{order} and \emph{size} of the graph $G$, respectively. Two adjacent vertices $u$ and $v$ are denoted by $u\sim v$. The \emph{neighbourhood} $N_G(v)$ of a vertex $v \in V$ is defined as the set of vertices adjacent to $v$. The \emph{degree} $\frak{d}_{v}$ of a vertex $v$ is the number of elements in the set $N_G(v)$. The \emph{complement} $\overline{G}$ of the graph $G$ is a graph with the same vertex set as $G$ and two distinct vertices $u, v$ are adjacent in $\overline{G}$ if and only if they are not adjacent in $G$. The \emph{complete graph} on $n$ vertices is denoted by $K_n$. An \emph{independent set} in a graph $G$ is a subset of vertices $S \subseteq V(G)$ such that no two vertices in $S$ are adjacent. A graph $G$ is said to be \emph{$k$-regular} if $\frak{d_v} =k $ for every vertex $v$ in $G$. 
 The \emph{join} $G_1 \vee G_2$ of graphs $G_1$ and $G_2$ with vertex set $V_1 \cup V_2$ is the graph in which each vertex of $V_1$ is adjacent to every vertex of $V_2$. For additional notations and terminology, readers are referred to \cite{brouwer2011spectra,horn2012matrix,nicholson2012introduction}.
  
 In both mathematical and chemical literature, numerous well-known degree-based graph invariants, often referred to as topological indices, have been extensively studied. Recently, Gutman \cite{gutman2021geometric} introduced a new topological index, called the \textit{Sombor index}, denoted by $ \text{SO}(G) $, which is defined as:

\[
\text{SO}(G) = \sum_{\{v_i, v_j\} \in E(G)} \sqrt{\frak{d}_{v
_i}^2 + \frak{d}_{v_j}^2}
,\]
where $ \frak{d}_{v_i} $ and $ \frak{d}_{v_j} $ represent the degrees of the vertices $ v_i $ and $ v_j $ in the graph $ G $ and the summation runs over all edges in $ E(G) $. Das et al. \cite{das2021sombor} established novel bounds for the Sombor index $SO(G)$ and explored its connections with other topological indices, such as the Zagreb indices of $G$. Do{\v{s}}lic et al. \cite{reti2021sombor} examined the extremal properties of the Sombor index of various graph families, established that the cycle $C_n$ attains the minimum Sombor index among all unicyclic graphs.
The Sombor index has been examined by several researchers for its chemical applications and molecular properties, see \cite{deng2021molecular, liu2022sombor, redvzepovic2021chemical} and references therein. The Sombor index is important not only in combinatorics and linear algebra but also has a deeper geometric meaning, see \cite{gutman2021geometric, gutman2022sombor} and references therein.

The \emph{Sombor matrix} of a graph $G$, denoted by $S(G)$ is defined as:
\[
S(G) = (s_{ij}) = 
\begin{cases} 
\sqrt{\frak{d}_{v_i}^2 + \frak{d}_{v_j}^2}, & \text{if } v_i \text{ and } v_j \text{ are adjacent}, \\
0, & \text{otherwise}.
\end{cases}
\]
 Clearly, this matrix is both real and symmetric. Let the eigenvalues of $S(G)$ be denoted by $\frak{\lambda_i}$, ordered as $\frak{\lambda_1} \geq \frak{\lambda_2} \geq \cdots \geq \frak{\lambda_n}$. The collection of all eigenvalues (counting multiplicities) of $S(G)$ is referred to as the \emph{Sombor spectrum} of $G$. The largest eigenvalue of $S(G)$ is known as the \emph{Sombor spectral radius} of $G$, and for real symmetric matrices, it coincides with the spectral norm of $G$. 
 The \emph{Sombor energy} of $G$ is defined as:
\[
ESO(G) = \sum_{i=1}^{n} |\frak{\lambda_i}|.
\]
 Several papers have explored the spectral properties of the Sombor matrix, such as the characteristics of Sombor eigenvalues, spectral radius, energy, Estrada index and the relationships between energy, Sombor energy, the Sombor index and other invariants (see, \cite{ghanbari2022sombor,ivan2021spectrum,pirzada2021sombor,rather2022sharp}).
 
To expand the class of zero-divisor graphs, the notion of weakly zero-divisor graph has been introduced by Nikmehr et al.  \cite{nikmehr2021weakly}. The \emph{weakly zero-divisor graph} $W\Gamma(R)$ of the ring $R$ is the simple undirected graph whose vertex set is the set of all nonzero zero-divisors of $R$  and two vertices $x, y$ are adjacent if and only if there exist $w\in {\rm ann}(x)$ and $z \in {\rm ann}(y)$ such that $wz =0$. One can observe that the zero-divisor graph is a spanning subgraph of the weakly zero-divisor graph. Nikmehr et al.  \cite{nikmehr2021weakly} explored the relationship between zero-divisor graphs and weakly zero-divisor graphs. Moreover, they examined several basic properties, including completeness, girth, clique number, and vertex chromatic number of $W\Gamma(R)$.
Rehman et al. \cite{ur2024planarity} classified all the commutative rings $R$ for which the weakly zero-divisor graph is star graphs, unicyclic graphs, trees and split graphs. Furthermore, they identified all the rings $R$ for which $W\Gamma(R)$ is planar, toroidal, bitoroidal and of crosscap almost two, respectively. Many other researchers have studied and investigated weakly zero-divisor graphs of commutative rings from different perspectives (see, \cite{nazim2023normalized,rehman2023analysis,rehman2024randic,shariq2023laplacianspectrum}).

The spectral properties and topological indices are very well studied for graphs defined on rings; see \cite{Asirwiner,bajaj2022adjacency, Sriparnazero,pirzada2020distance,ShenShouqiang2022} and references therein. Kumar et al. \cite{shariq2023laplacianspectrum} investigated the Laplacian eigenvalues of the weakly zero-divisor graph of the ring $\mathbb{Z}_n$. The Randic spectrum and normalized Laplacian
 spectrum of the weakly zero-divisor graph of the ring $\mathbb{Z}_n$ has been determined by Nadeem et al. \cite{nazim2023normalized} \cite{rehman2024randic}. Rashid et al. \cite{mozumder2025exploring} explored the $A_\alpha$ spectrum of the weakly zero-divisor graph of the ring $\mathbb{Z}_n$. Gursoy et al. \cite{gursoy2022sombor} investigated the Sombor index of the zero-divisor graph of the integer modulo ring $\mathbb{Z}_n$ for different values of $n$. Bilal et al. \cite{rather2024sombor} established the upper and lower bounds of the Sombor index for comaximal graphs of the ring $\mathbb{Z}_n$. Additionally, they determined the Sombor eigenvalues and provided the bounds for the Sombor energy of the comaximal graphs.
Anwar et al. \cite{anwar2024sombor} investigated the Sombor index of the cozero-divisor graph of the ring $\mathbb{Z}_n$ for various values of $n$. Moreover, they determined the Sombor spectrum of the cozero-divisor graph. Motivated by the above-cited work, in this manuscript, we aim to study the Sombor index and the Sombor spectrum of the weakly zero-divisor graph of the ring $\mathbb{Z}_n$. The rest of the manuscript is organized as follows: Section 2, recall the necessary definitions and results. In section 3, we investigate the Sombor index of the weakly zero-divisor graph $W(\Gamma(\mathbb{Z}_n))$. For arbitrary $n$, Section 4 determines the Sombor eigenvalues of $W(\Gamma(\mathbb{Z}_n))$ and establishes the bounds for Sombor energy of $W(\Gamma(\mathbb{Z}_n))$.

\section{sombor index  of the weakly zero-divisor graph integers modulo ring} 
Let $\mathbb{Z}_n = \{0, 1, \ldots, n-1\}$ be the the ring of integers modulo $n$. The number of integers less than $n$ that are coprime to $n$ is given by the \emph{Euler totient function}, denoted as $\phi(n)$. An integer $d$ is called a \emph{proper divisor} of $n$ if $1 < d < n$ and $d \mid n$. The total number of divisors of $n$ is represented by $\tau(n)$. The greatest common divisor of positive integers $a$ and $b$ is denoted by $\text{gcd}(a, b)$.
In the ring $\mathbb{Z}_n$, the ideal generated by an element $a$ is defined as the set $\{xa: x \in \mathbb{Z}_n\}$ and is denoted by $\langle a \rangle$.
 Let $d_1, d_2, \ldots, d_k$ be  the proper divisors of $n$. For $1 \leq i \leq k$, consider the following sets:
\[
\mathcal{A}_{d_i} = \{x \in \mathbb{Z}_n : \text{gcd}(x, n) = d_i\}.
\]
Also, observe that $\mathcal{A}_{d_{i}}\cap \mathcal{A}_{d_{j}} = \varnothing$, when $i \neq j$. It follows that the sets $\mathcal{A}_{d_{1}}, \mathcal{A}_{d_{2}}, \dots,\mathcal{A}_{d_{k}}$ are pairwise disjoint and form a partition of the vertex set of the graph  $W\Gamma(\mathbb{Z}_n)$. Therefore,
\[V(W\Gamma(\mathbb{Z}_n)) = \mathcal{A}_{d_{1}} \cup \mathcal{A}_{d_{2}} \cup \cdots \cup \mathcal{A}_{d_{k}}.\] 
The cardinality of each $\mathcal{A}_{d_i}$ is known in the following lemma. 
\begin{lemma}\cite{MR3404655}\label{valueof partition}
 For $1 \leq i \leq k$, we have
$|\mathcal{A}_{d_i}| = \phi\left(\frac{n}{d_i}\right)$. 
\end{lemma}

 \begin{lemma}\cite[Lemma 3.3]{shariq2023laplacianspectrum}\label{adjacenyofvertex}
 Let $x \in {\mathcal{A}_{d_i}}$, $ y \in \mathcal{A}_{d_j}$, where $i, j \in \{1, 2, \ldots, k\}$ and $i\neq j$. Then $x \sim y$ in $W\Gamma (\mathbb{Z}_n)$.
 \end{lemma}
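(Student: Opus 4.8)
The plan is to reduce the adjacency question to an explicit search for witnessing elements inside the annihilators, which in $\mathbb{Z}_n$ are principal ideals. Recall that, by definition of $W\Gamma(\mathbb{Z}_n)$, the vertices $x$ and $y$ are adjacent precisely when there exist nonzero $w \in {\rm ann}(x)$ and $z \in {\rm ann}(y)$ with $wz = 0$. I would begin by computing these annihilators: if $x \in \mathcal{A}_{d_i}$ then $\gcd(x,n) = d_i$, so $x = d_i x'$ with $\gcd(x', n/d_i) = 1$, and a short divisibility argument gives ${\rm ann}(x) = \langle n/d_i \rangle$, the set of multiples of $n/d_i$ in $\mathbb{Z}_n$; likewise ${\rm ann}(y) = \langle n/d_j \rangle$. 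Since $d_i, d_j$ are proper divisors, both $n/d_i$ and $n/d_j$ are themselves proper divisors of $n$, hence nonzero in $\mathbb{Z}_n$.

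Next I would try the most economical witnesses. Fixing $w = n/d_i$, the requirement $wz \equiv 0 \pmod n$ is equivalent to $d_i \mid z$, so I need a nonzero $z$ that is simultaneously a multiple of $n/d_j$ (to lie in ${\rm ann}(y)$) and of $d_i$; the smallest candidate is $\ell_1 = \mathrm{lcm}(d_i,\, n/d_j)$. As $d_i$ and $n/d_j$ both divide $n$, so does $\ell_1$; hence whenever $\ell_1 \neq n$ the element $z = \ell_1$ is a nonzero member of $\langle n/d_j \rangle$ with $d_i \mid z$, and $(w,z) = (n/d_i,\, \ell_1)$ witnesses $x \sim y$. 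By the symmetric choice $z = n/d_j$, the same argument succeeds whenever $\ell_2 = \mathrm{lcm}(d_j,\, n/d_i) \neq n$.

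It therefore remains to rule out $\ell_1 = \ell_2 = n$ occurring simultaneously, and this is the only real obstacle; it is exactly where the hypothesis $i \neq j$ (equivalently $d_i \neq d_j$) must enter. I would argue prime-by-prime: writing $n = \prod_t p_t^{e_t}$ and setting $f_t^{(i)} = v_{p_t}(d_i)$, $f_t^{(j)} = v_{p_t}(d_j)$, the equality $\ell_1 = n$ means $\max\bigl(f_t^{(i)},\, e_t - f_t^{(j)}\bigr) = e_t$ for every $t$, i.e. $f_t^{(i)} = e_t$ or $f_t^{(j)} = 0$; symmetrically $\ell_2 = n$ forces $f_t^{(j)} = e_t$ or $f_t^{(i)} = 0$ for every $t$. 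A short case check on each prime shows these two conditions together force $f_t^{(i)} = f_t^{(j)}$ for all $t$, whence $d_i = d_j$, contradicting $i \neq j$. Consequently at least one of $\ell_1, \ell_2$ differs from $n$, the corresponding construction goes through, and $x \sim y$. The delicate point I would isolate as a separate computation is precisely this per-prime dichotomy; the annihilator identity and the fact that the lcm of two divisors of $n$ again divides $n$ are routine.
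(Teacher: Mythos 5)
Your proposal is correct. Note that the paper itself offers no proof of this lemma --- it is imported verbatim as Lemma 3.3 of the cited reference \cite{shariq2023laplacianspectrum} --- so there is no in-text argument to compare against; what you have written is a valid, self-contained proof. The annihilator computation $\mathrm{ann}(x)=\langle n/d_i\rangle$ is correct, and your two candidate witnesses $(n/d_i,\ \mathrm{lcm}(d_i,n/d_j))$ and $(\mathrm{lcm}(d_j,n/d_i),\ n/d_j)$ do cover all cases: the per-prime dichotomy showing that $\mathrm{lcm}(d_i,n/d_j)=\mathrm{lcm}(d_j,n/d_i)=n$ forces $v_{p_t}(d_i)=v_{p_t}(d_j)$ for every prime $p_t\mid n$, hence $d_i=d_j$, checks out, and this is genuinely the crux --- the naive choice $w=n/d_i$, $z=n/d_j$ fails in general (e.g.\ $n=12$, $d_i=4$, $d_j=6$), so some such lcm adjustment is unavoidable. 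One small but worthwhile remark: you correctly insist that $w$ and $z$ be \emph{nonzero}, which is the definition of Nikmehr et al.\ (witnesses taken in $\mathrm{ann}(x)^{*}$ and $\mathrm{ann}(y)^{*}$); the definition as transcribed in this paper omits the word ``nonzero,'' under which reading the lemma would be vacuous, so your reading is the right one and is worth stating explicitly.
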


\begin{lemma}\cite[Lemma 3.5]{shariq2023laplacianspectrum}\label{adjacenyofvertexinjoin} 
For any proper divisor  $d_i$ of $n$, $W\Gamma (\mathcal{A}_{d_i})$ is either a null graph or a complete graph.
\end{lemma}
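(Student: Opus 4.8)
The plan is to reduce the entire question to a single condition that depends only on the divisor $d_i$ and not on the individual vertices of $\mathcal{A}_{d_i}$. The mechanism that makes this work is that the annihilator $\mathrm{ann}(x)$ is constant as $x$ ranges over the class $\mathcal{A}_{d_i}$, so adjacency inside the class cannot discriminate between different pairs.

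First I would compute $\mathrm{ann}(x)$ for an arbitrary $x \in \mathcal{A}_{d_i}$. Writing $x = d_i x'$ with $\gcd\!\left(x', \tfrac{n}{d_i}\right) = 1$, one sees that for $w \in \mathbb{Z}_n$ the congruence $wx \equiv 0 \pmod n$ holds iff $\tfrac{n}{d_i} \mid w x'$, and since $\gcd\!\left(x', \tfrac{n}{d_i}\right) = 1$ this is equivalent to $\tfrac{n}{d_i} \mid w$. Hence
\[
\mathrm{ann}(x) = \left\langle \tfrac{n}{d_i} \right\rangle ,
\]
an ideal that manifestly depends only on $d_i$. In particular any two vertices $x, y \in \mathcal{A}_{d_i}$ satisfy $\mathrm{ann}(x) = \mathrm{ann}(y) = \langle n/d_i \rangle$.

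Next I would substitute this into the adjacency rule. By definition $x \sim y$ in $W\Gamma(\mathbb{Z}_n)$ iff there exist (nonzero, as required in the definition of $W\Gamma$) elements $w \in \mathrm{ann}(x)$ and $z \in \mathrm{ann}(y)$ with $wz = 0$. Because $\mathrm{ann}(x) = \mathrm{ann}(y) = \langle n/d_i \rangle$ for \emph{every} pair of vertices in the class, this is equivalent to the assertion that there exist nonzero $w, z \in \langle n/d_i \rangle$ with $wz \equiv 0 \pmod n$. This is one fixed Boolean statement $P(d_i)$ attached to the divisor $d_i$; it makes no reference whatsoever to the particular vertices $x$ and $y$.

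Finally I would conclude by a case split on $P(d_i)$: if $P(d_i)$ holds then every pair of distinct vertices of $\mathcal{A}_{d_i}$ is adjacent, so the induced subgraph $W\Gamma(\mathcal{A}_{d_i})$ is complete, while if $P(d_i)$ fails no pair is adjacent and $W\Gamma(\mathcal{A}_{d_i})$ is null. Either way the dichotomy of the statement is obtained. The only step demanding genuine care is the annihilator computation, where the coprimality $\gcd\!\left(x', \tfrac{n}{d_i}\right)=1$ must be invoked correctly; once the annihilator is seen to be invariant on the class, the null-or-complete conclusion is immediate.
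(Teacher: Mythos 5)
Your argument is correct: the computation $\mathrm{ann}(x)=\langle n/d_i\rangle$ for every $x\in\mathcal{A}_{d_i}$ is right (via $\gcd(x',n/d_i)=1$), and it genuinely reduces adjacency within the class to a single condition depending only on $d_i$, which forces the induced subgraph to be complete or null. The paper itself offers no proof---the lemma is imported by citation from \cite[Lemma~3.5]{shariq2023laplacianspectrum}---but your argument is the natural one underlying that reference, and you are also right to insist that $w,z$ be nonzero, a point the paper's statement of the adjacency rule omits but which is essential (otherwise $W\Gamma(R)$ would always be complete).
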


If $n={p_1}^{k_1}{p_2}^{k_2}\ldots{p_m}^{k_m},$ where $k_i\geq 2$, then by  \rm \cite[Theorem 2.6]{nikmehr2021weakly}, the graph $W\Gamma(\mathbb{Z}_n)$ is a complete. Also, if $n = p$, then the graph $W\Gamma(\mathbb{Z}_p)$ is a null graph. Therefore, in the remaining paper, we determine the Sombor spectrum of $W\Gamma(\mathbb{Z}_n)$ for $n = p_1p_2\ldots p_m{q_1}^{k_1}{q_2}^{k_2}\ldots{q_r}^{k_r} $, where $k_i\geq 2,m\geq2$ and $m+r\geq2$. 
 
\begin{lemma}\cite[Lemma 3.6]{shariq2023laplacianspectrum}\label{adjacenyofjoin}
Let $D$= $\{ d_1,d_2,\ldots, d_{k}\}$ be the set of all proper divisors of $n$ and let $n$= $p_1p_2\ldots$ $ p_m{q_1}^{k_1}{q_2}^{k_2}\ldots{q_r}^{k_r} $, where $k_i\geq 2,m\geq 2$ and $m+r\geq 2 $. Then the subgraph of $W\Gamma ({\mathbb Z_n}) $  induced by $\mathcal{A}_{d_i}$ is $\overline{K}_{\phi\left({\frac{n}{d_i}}\right)}$ if and only if $d_i\in \{p_1,p_2,\ldots ,p_m\}$.
\end{lemma}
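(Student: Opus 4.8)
The plan is to reduce the graph-theoretic question to an elementary divisibility statement by first computing annihilators, and then to run a short case analysis on the divisor $d_i$. First I would observe that the annihilator of a vertex depends only on its class: if $x\in\mathcal{A}_{d_i}$, write $e=n/d_i$ and $x=d_ix'$ with $\gcd(x',e)=1$; then $n\mid wx$ is equivalent to $e\mid w$, so ${\rm ann}(x)=\langle e\rangle$ for \emph{every} $x\in\mathcal{A}_{d_i}$, whose nonzero elements are $e,2e,\dots,(d_i-1)e$. Since Lemma~\ref{adjacenyofvertexinjoin} already tells us the induced subgraph $W\Gamma(\mathcal{A}_{d_i})$ is either edgeless or complete, it only remains to decide which. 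Two distinct vertices of $\mathcal{A}_{d_i}$ are adjacent exactly when $\langle e\rangle$ contains nonzero $w,z$ with $wz=0$; writing $w=ae$ and $z=be$, this becomes the purely arithmetic condition that there exist $a,b$ with $d_i\nmid a$, $d_i\nmid b$ and $n\mid abe^2$, i.e. $d_i\mid abe$. Thus $W\Gamma(\mathcal{A}_{d_i})=\overline{K}_{\phi(n/d_i)}$ precisely when no such $a,b$ exist.

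For the forward implication I would assume $d_i=p_j$ for some $j$. Because $p_j$ divides $n$ only to the first power, $e=n/p_j$ is coprime to $p_j$, so $d_i\mid abe$ forces $p_j\mid ab$, hence $p_j\mid a$ or $p_j\mid b$; this contradicts $1\le a,b\le p_j-1$. Therefore no edge exists and the class induces the null graph $\overline{K}_{\phi(n/p_j)}$.

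For the converse I would argue the contrapositive, exhibiting explicit witnesses when $d_i\notin\{p_1,\dots,p_m\}$. Being a proper divisor different from every $p_j$, such a $d_i$ is either composite or equals some $q_j$. If $d_i=uv$ is composite with $1<u,v<d_i$, take $w=ue$ and $z=ve$: both are nonzero (as $u,v<d_i$) and $wz=d_ie^2=(d_ie)e=ne\equiv 0$. If $d_i=q_j$, then $k_j\ge 2$ gives $q_j^2\mid n$, i.e. $d_i\mid e$, so $w=z=e$ is nonzero with $wz=e^2\equiv 0$. In either case an edge appears, so by Lemma~\ref{adjacenyofvertexinjoin} the subgraph is complete, which establishes the equivalence. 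I expect the main obstacle to be the clean reduction of adjacency within a class to the divisibility statement $d_i\mid abe$ via the annihilator computation; once that is set up, the case analysis and the degenerate singleton classes (where $\phi(n/d_i)=1$ and $\overline{K}_1=K_1$) are routine.
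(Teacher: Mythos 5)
Your proof is correct and complete. Note that this manuscript does not prove the lemma at all --- it is imported verbatim from \cite[Lemma 3.6]{shariq2023laplacianspectrum} --- so there is no in-paper argument to compare against; your write-up is a self-contained verification. The key steps all check out: for $x\in\mathcal{A}_{d_i}$ the annihilator is exactly $\langle n/d_i\rangle$, so adjacency of two distinct vertices of $\mathcal{A}_{d_i}$ reduces to the existence of $a,b$ with $d_i\nmid a$, $d_i\nmid b$ and $d_i\mid ab\,e$ where $e=n/d_i$; when $d_i=p_j$ divides $n$ exactly once, $\gcd(p_j,e)=1$ and primality kills this, while for $d_i$ composite (take $w=ue$, $z=ve$) or $d_i=q_j$ with $q_j^2\mid n$ (take $w=z=e$) you exhibit explicit nonzero annihilating witnesses, and Lemma~\ref{adjacenyofvertexinjoin} upgrades one edge to completeness. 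You correctly read ${\rm ann}(x)$ as ${\rm ann}(x)\setminus\{0\}$ in the adjacency condition, which is the intended convention (otherwise every pair would be adjacent). The one point worth making explicit rather than calling routine is the singleton class: when $2\mid n$ the class $\mathcal{A}_{n/2}$ has $\phi(2)=1$ element, and $\overline{K}_1=K_1$, so for $n/2\notin\{p_1,\dots,p_m\}$ the ``only if'' direction is, literally read, violated (e.g.\ $n=30$, $d_i=15$). This is an imprecision inherited from the cited lemma, not a flaw in your argument, but a careful proof should state the convention that a one-vertex induced subgraph is regarded as $K_1$.
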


\begin{corollary}\cite[Lemma 3.7]{shariq2023laplacianspectrum}\label{partitionofcozerodivisorgraphisomorphic}
The following statements hold:
\begin{itemize}
    \item[(i)] For $i \in \{1, 2, \ldots, k\}$, the  subgraph of $W\Gamma(\mathbb{Z}_n)$ induced by $\mathcal{A}_{d_i}$ is isomorphic to either $\overline{K}_{\phi\left(\frac{n}{d_i}\right)}$ or ${K}_{\phi\left(\frac{n}{d_i}\right)}.$
    \item [(ii)] For $i,j \in \{1, 2, \ldots, k\}$ and $i \neq j$, a vertex of $\mathcal{A}_{d_i}$ is adjacent to  all the vertices of $\mathcal{A}_{d_j}$. 
\end{itemize}
\end{corollary}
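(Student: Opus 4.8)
The plan is to obtain both assertions as immediate consequences of the structural lemmas already recorded, since the corollary essentially repackages them together with the correct vertex counts. For part~(i), I would begin with Lemma~\ref{adjacenyofvertexinjoin}, which tells us that for each proper divisor $d_i$ the induced subgraph $W\Gamma(\mathcal{A}_{d_i})$ is either a null graph or a complete graph. To identify the precise isomorphism type it then remains only to count the vertices of this subgraph, and this is supplied by Lemma~\ref{valueof partition}, which gives $|\mathcal{A}_{d_i}| = \phi(n/d_i)$. Combining the two, when the induced subgraph is null it equals $\overline{K}_{\phi(n/d_i)}$, and when it is complete it equals $K_{\phi(n/d_i)}$, which is exactly the dichotomy asserted in~(i).

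For part~(ii), I would invoke Lemma~\ref{adjacenyofvertex} directly: whenever $x \in \mathcal{A}_{d_i}$ and $y \in \mathcal{A}_{d_j}$ with $i \neq j$, the vertices $x$ and $y$ are adjacent in $W\Gamma(\mathbb{Z}_n)$. As $x$ and $y$ range over all of $\mathcal{A}_{d_i}$ and $\mathcal{A}_{d_j}$ respectively, this says precisely that every vertex of $\mathcal{A}_{d_i}$ is joined to every vertex of $\mathcal{A}_{d_j}$, proving~(ii).

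Since the corollary is a direct synthesis of Lemmas~\ref{valueof partition}, \ref{adjacenyofvertex}, and~\ref{adjacenyofvertexinjoin}, there is no substantive obstacle to overcome; the result is purely a matter of assembling the pieces. The only point meriting a moment's care lies in the bookkeeping for part~(i): one must correctly transport the cardinality $\phi(n/d_i)$ from Lemma~\ref{valueof partition} into the subscript of the complete or empty graph, so that the stated isomorphism type is labelled with the right order.
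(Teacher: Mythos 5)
Your proposal is correct and matches the intended derivation: the paper states this corollary without proof (citing it as Lemma 3.7 of \cite{shariq2023laplacianspectrum}), and it is exactly the assembly of Lemma~\ref{adjacenyofvertexinjoin} with the cardinality from Lemma~\ref{valueof partition} for part~(i), and Lemma~\ref{adjacenyofvertex} for part~(ii). Nothing further is needed.
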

Let $G$ be a simple undirected graph on $k$ vertices with $V(G) = \{v_1, v_2, \dots, v_k\}$ and let $H_1, H_2, \dots, H_k$ be $k$ pairwise disjoint graphs. The \emph{generalized join graph} $G[H_1, H_2, \dots, H_k]$ is obtained by replacing each vertex $v_i$ of $G$ with the graph $H_i$ and then connecting each vertex of $H_i$ to every vertex of $H_j$ whenever $v_i \sim v_j$ in $G$. Note that if $G$ consists of only two adjacent vertices, then the \emph{generalised join graph} $G[H_1, H_2]$ is the same as the usual join $H_1 \vee H_2$ of $H_1$ and $H_2$.
 The following lemma states that $W(\Gamma(\mathbb{Z}_n))$  can be expressed as the generalised join of certain complete graphs and null graphs. The graph $ \Upsilon_n $ is defined as the simple graph whose vertices are the proper divisors $ d_1, d_2, \ldots, d_k $ of $ n $ and any two distinct vertices $ d_i $ and $ d_j $ are always adjacent.
\begin{lemma}\cite[Lemma 3.8]{shariq2023laplacianspectrum}\label{inducedsubgraphequaltogamma} 
Let  $W\Gamma(\mathcal{A}_{d_i})$ be the subgraph of $W\Gamma(\mathbb{Z}_{n})$ induced by $\mathcal{A}_{d_i}$ $(1\leq i\leq k)$. Then \[W\Gamma(\mathbb{Z}_n) = \Upsilon_n [W\Gamma(\mathcal{A}_{d_1}), W\Gamma (\mathcal{A}_{d_2}), \ldots, W\Gamma(\mathcal{A}_{d_k})].\]
\end{lemma}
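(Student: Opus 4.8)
The plan is to verify the claimed identity by comparing vertex sets and edge sets of the two graphs directly, using the partition of $V(W\Gamma(\mathbb{Z}_n))$ into the classes $\mathcal{A}_{d_i}$ together with the adjacency facts already recorded. Since a generalized join is completely specified by its vertex classes and by which pairs of classes are fully joined, establishing the equality reduces to checking agreement of adjacency on intra-class and inter-class pairs separately.

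First I would observe that, by construction, the vertex set of $\Upsilon_n[W\Gamma(\mathcal{A}_{d_1}), \ldots, W\Gamma(\mathcal{A}_{d_k})]$ is the disjoint union $\bigcup_{i=1}^{k} V(W\Gamma(\mathcal{A}_{d_i})) = \bigcup_{i=1}^{k} \mathcal{A}_{d_i}$, which equals $V(W\Gamma(\mathbb{Z}_n))$ because the sets $\mathcal{A}_{d_i}$ are pairwise disjoint and partition the nonzero zero-divisors of $\mathbb{Z}_n$. Thus both graphs share the same vertex set, and it remains to check that adjacency agrees on every pair of distinct vertices $x, y$.

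Next I would split into two cases according to whether $x$ and $y$ lie in the same class. If $x, y \in \mathcal{A}_{d_i}$ for some $i$, then in the generalized join $x \sim y$ if and only if they are adjacent in $H_i = W\Gamma(\mathcal{A}_{d_i})$, which by the definition of the induced subgraph coincides with adjacency in $W\Gamma(\mathbb{Z}_n)$; so the two graphs agree on intra-class pairs. If $x \in \mathcal{A}_{d_i}$ and $y \in \mathcal{A}_{d_j}$ with $i \neq j$, then on the one hand Lemma~\ref{adjacenyofvertex} (equivalently Corollary~\ref{partitionofcozerodivisorgraphisomorphic}(ii)) guarantees $x \sim y$ in $W\Gamma(\mathbb{Z}_n)$. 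On the other hand, since $\Upsilon_n$ is defined so that any two distinct proper divisors $d_i, d_j$ are adjacent, i.e. $\Upsilon_n \cong K_k$, the generalized join connects every vertex of $H_i$ to every vertex of $H_j$; hence $x \sim y$ in the join as well. So adjacency also agrees on inter-class pairs.

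Combining the two cases shows that the edge sets coincide, and together with the matching vertex sets this yields the desired graph equality. The argument is essentially a bookkeeping verification, so I do not expect a serious obstacle; the only real content is recognizing that $\Upsilon_n$ being complete is exactly what makes the inter-class connections in the generalized join complete bipartite, which is precisely the statement of Lemma~\ref{adjacenyofvertex}. The main point to handle carefully is keeping the induced-subgraph (intra-class) edges and the join (inter-class) edges cleanly separated, so that each is matched against its correct counterpart in the generalized join construction.
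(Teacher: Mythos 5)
Your proof is correct and is the natural direct verification: the vertex sets agree because the $\mathcal{A}_{d_i}$ partition the nonzero zero-divisors, intra-class adjacency agrees by the definition of induced subgraph, and inter-class adjacency agrees because Lemma~\ref{adjacenyofvertex} makes all cross-class pairs adjacent while the completeness of $\Upsilon_n$ makes the generalized join do the same. Note that the present paper states this lemma without proof, citing it from an earlier work, so there is no in-paper argument to compare against; your bookkeeping check correctly identifies the one substantive point, namely that $\Upsilon_n$ being complete is exactly the graph-theoretic content of Lemma~\ref{adjacenyofvertex}.
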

\begin{example}
The weakly zero-divisor graph $W\Gamma(\mathbb{Z}_{18})$ is shown in Figure $1$. By Lemma \ref{inducedsubgraphequaltogamma}, we have  $W\Gamma(\mathbb{Z}_{18}) = \Upsilon_{18} [W\Gamma(\mathcal{A}_{2})$, 
$W\Gamma(\mathcal{A}_{3})$, $W\Gamma(\mathcal{A}_{6})$, 
$W\Gamma(\mathcal{A}_{9})$], where $\Upsilon_{18}$ is complete graph on the set $\{2,3,6,9\}$ and  $W\Gamma(\mathcal{A}_{2})$ = $\overline{K}_6$, $W\Gamma(\mathcal{A}_{3})$ = ${K}_2$ = $W\Gamma(\mathcal{A}_{6})$, $W\Gamma(\mathcal{A}_{9})$ = ${K}_1 $. 
\begin{figure}[h!]
\centering
\includegraphics[width=0.43 \textwidth]{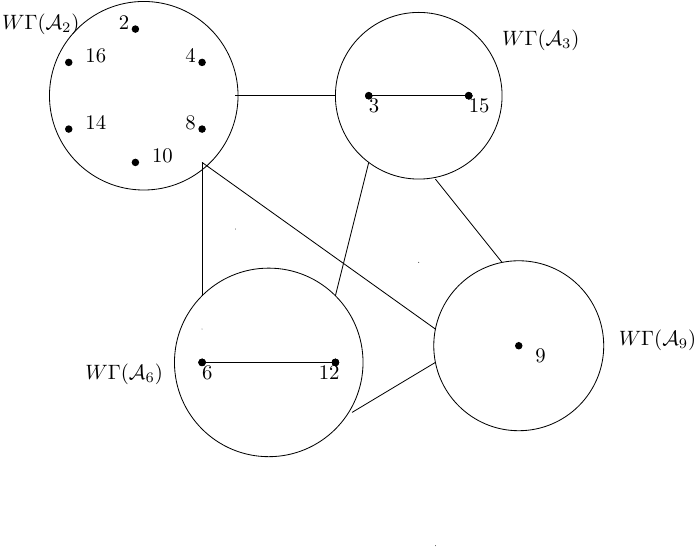}
\caption{The graph $W\Gamma(\mathbb{Z}_{18})$}
\end{figure}
\end{example}
\begin{theorem} Let $W\Gamma(\mathbb{Z}_n)$ be the weakly zero-divisor graph of the ring $\mathbb{Z}_n$. Then the following holds: 
\begin{itemize}
      \item[(i)] If $n={p_1}^{k_1}{p_2}^{k_2}\cdots{p_m}^{k_m},$ where $k_i\geq 2$, then 
 \begin{align*}
         SO(W\Gamma(\mathbb{Z}_n))&= \frac{(n-\phi(n)-1)}{\sqrt{2}}\left( n-\phi(n)-2\right)^2.
\end{align*}
    \item[(ii)] If $n = p_1p_2\cdots p_m{q_1}^{k_1}{q_2}^{k_2}\cdots{q_r}^{k_r} $, where $k_i\geq 2,m\geq 1$ and $m+r\geq 2 $, then \begin{align*}
         SO(W\Gamma(\mathbb{Z}_n))&= \frac{(n-\phi(n)-2)}{\sqrt{2}}\left( (n-\phi(n)-\sum \limits_{i=1 }^k\phi\left( \frac{n}{p_i}\right)-1\right)\left( n-\phi(n)-\sum \limits_{i=1 }^k\phi\left( \frac{n}{p_i}\right)-2\right)\\ &+\left( n-\phi(n)-\sum \limits_{i=1 }^k\phi\left( \frac{n}{p_i}\right)-1\right)\sum \limits_{i=1 }^k\phi\left( \frac{n}{p_i}\right)\sqrt{ \left(n-\phi(n)-2 \right)^2+\left(n-\phi(n)-\phi\left(\frac{n}{p_i}\right)-1\right)^2}  \\ & +  \sum \limits_{i=1 }^k\phi\left( \frac{n}{p_i}\right) \sum \limits_{i\neq j=1 }\phi\left(\frac{n}{p_j}\right) \sqrt{\left(n-\phi(n)-\phi\left(\frac{n}{p_i}\right)-1\right)^2 +\left(n-\phi(n)-\phi\left(\frac{n}{p_j}\right)-1\right)^2}.
     \end{align*}
\end{itemize}
    \end{theorem}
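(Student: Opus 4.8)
The plan is to treat the two cases separately, reducing each to a degree computation plus a partition of the edge set; essentially all the structural work is already done by the lemmas quoted above, so the task is bookkeeping.

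For part (i), I would first record that the nonzero zero-divisors of $\mathbb{Z}_n$ number exactly $N := n-\phi(n)-1$, since among the $n$ elements we delete the $\phi(n)$ units and the element $0$. By \cite[Theorem 2.6]{nikmehr2021weakly} the graph $W\Gamma(\mathbb{Z}_n)$ is the complete graph $K_N$, in which every vertex has degree $N-1 = n-\phi(n)-2$ and there are $\binom{N}{2}$ edges. Because every edge joins two vertices of the common degree $N-1$, each term in the Sombor sum equals $\sqrt{2}\,(N-1)$, whence $SO(W\Gamma(\mathbb{Z}_n)) = \binom{N}{2}\sqrt{2}\,(N-1) = \tfrac{1}{\sqrt{2}} N (N-1)^2$. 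Substituting $N = n-\phi(n)-1$ yields the claimed expression.

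For part (ii), the key input is Lemma \ref{inducedsubgraphequaltogamma}, which presents $W\Gamma(\mathbb{Z}_n)$ as the generalized join $\Upsilon_n[W\Gamma(\mathcal{A}_{d_1}),\ldots,W\Gamma(\mathcal{A}_{d_k})]$ over the complete graph $\Upsilon_n$. By Lemma \ref{adjacenyofjoin} and Corollary \ref{partitionofcozerodivisorgraphisomorphic}, the part $\mathcal{A}_{d_i}$ induces the null graph $\overline{K}_{\phi(n/d_i)}$ exactly when $d_i \in \{p_1,\ldots,p_m\}$ and the complete graph $K_{\phi(n/d_i)}$ otherwise, while every vertex of one part is adjacent to every vertex of every other part. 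Using $|\mathcal{A}_{d_i}| = \phi(n/d_i)$ (Lemma \ref{valueof partition}) together with $\sum_{i=1}^k \phi(n/d_i) = n-\phi(n)-1 = N$ (Gauss's identity $\sum_{d\mid n}\phi(d)=n$ with the divisors $1$ and $n$ removed), I would compute the two possible degrees: a vertex in a clique part is adjacent to everything except itself, so it has degree $N-1 = n-\phi(n)-2$; a vertex in the null part attached to $p_i$ misses only its own part, so it has degree $N-\phi(n/p_i) = n-\phi(n)-\phi(n/p_i)-1$.

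With the degrees fixed, I would split $E(W\Gamma(\mathbb{Z}_n))$ into three classes, writing $S := \sum_{i}\phi(n/p_i)$ for the total size of the null parts. Class (A): both endpoints in clique parts; since all clique parts are mutually joined and individually complete, these vertices induce $K_{N-S}$, giving $\binom{N-S}{2}$ edges each of value $\sqrt{2}\,(n-\phi(n)-2)$. Class (B): one endpoint in a clique part and one in the null part of $p_i$, yielding $\phi(n/p_i)(N-S)$ edges of value $\sqrt{(n-\phi(n)-2)^2+(n-\phi(n)-\phi(n/p_i)-1)^2}$. Class (C): endpoints in two distinct null parts $p_i,p_j$, yielding $\phi(n/p_i)\phi(n/p_j)$ edges of value $\sqrt{(n-\phi(n)-\phi(n/p_i)-1)^2+(n-\phi(n)-\phi(n/p_j)-1)^2}$. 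Summing the three contributions and rewriting $N-S = n-\phi(n)-\sum_i\phi(n/p_i)-1$ reproduces the three summands in the asserted formula. The one delicate point, and the place I would check most carefully, is class (C): the summand is symmetric in $i$ and $j$, so the count over unordered blocks $\{i,j\}$ differs by a factor of $2$ from a sum over ordered pairs $i\neq j$, and the double sum in the statement must be read in the matching convention. Everything else is substitution; the genuine simplification is the observation that \emph{every} clique-part vertex has the same degree $n-\phi(n)-2$, which is what collapses class (A) into a single complete-graph term.
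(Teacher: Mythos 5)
Your proof is correct and follows essentially the same route as the paper's: part (i) by completeness of $W\Gamma(\mathbb{Z}_n)$, and part (ii) by the generalized-join decomposition into one large clique $K_{N-S}$ joined to the null parts $\overline{K}_{\phi(n/p_i)}$, with the same two degree values and the same three edge classes (A), (B), (C). Your caveat about class (C) is well taken: the paper's double sum over ordered pairs $i\neq j$ must be read as a sum over unordered pairs (or else carry a factor $\tfrac{1}{2}$) so that each cross-part edge is counted once, a convention the paper leaves implicit.
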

\begin{proof}
\begin{itemize}
    \item[(i)] Let  $n={p_1}^{k_1}{p_2}^{k_2}\cdots{p_m}^{k_m},$ where $k_i\geq 2$. Then by \cite[Theorem 2.6]{nikmehr2021weakly}, the graph $W\Gamma(\mathbb{Z}_n)$ is complete on $n-\phi(n)-1$ vertices. Therefore, the sombor index of $W\Gamma(\mathbb{Z}_n)$ is
    \begin{align*}
         SO(W\Gamma(\mathbb{Z}_n))&= \frac{(n-\phi(n)-1)}{\sqrt{2}}\left( n-\phi(n)-2\right)^2.
\end{align*}
\item[(ii)]   For $n = p_1p_2\cdots p_m{q_1}^{k_1}{q_2}^{k_2}\cdots{q_r}^{k_r}$ the weakly zero-divisor graph  of $\mathbb{Z}_n$ can be constructed as follows. Let  $D=\{ d_1,d_2\cdots d_{\tau(n)-2}\}$ be the set of all proper divisors of $n$. 
Then, by Lemma \ref{inducedsubgraphequaltogamma}, the weakly zero-divisor graph of $\mathbb{Z}_n$ is given by 
\[ W\Gamma(\mathbb{Z}_n) = \Upsilon_n [W\Gamma(\mathcal{A}_{d_1}), W\Gamma (\mathcal{A}_{d_2}), \ldots, W\Gamma(\mathcal{A}_{d_m}),  W\Gamma(\mathcal{A}_{d_{m+1}}), W\Gamma (\mathcal{A}_{d_{m+2}}), \ldots, W\Gamma(\mathcal{A}_{d_{\tau(n)-2}})].\]
Since for each $d_i,d_j \in D$,  $d_i$  is adjacent to $d_j$ by Lemma \ref{adjacenyofvertex}, it follows that the graph $\Upsilon_{n}$ is complete on the vertex set $D$.
Now, let $A=\{p_1,p_2,\ldots p_m\}$. Then by Lemma \ref{adjacenyofjoin}, for each $d_i\in A$, we have $W\Gamma(\mathcal{A}_{d_{i}}) = \overline{K}_{\phi \left(\frac{n}{d_i}\right)}$ and $W\Gamma(\mathcal{A}_{d_j}) =  {K}_{\phi\left(\frac{n}{d_j}\right)}$, where $d_j\notin A$. Consequently, 
\begin{align*}  W\Gamma(\mathbb{Z}_n)
 &= \Upsilon_n [\overline{K}_{\phi\left(\frac{n}{p_1}\right)}, \overline{K}_{\phi\left(\frac{n}{p_2}\right)},\ldots, \overline{K}_{\phi\left(\frac{n}{p_m}\right)}, {K}_{\phi\left(\frac{n}{d_{m+1}}\right)},{K}_{\phi\left(\frac{n}{d_{m+2}}\right)},\ldots,{K}_{\phi\left(\frac{n}{d_{\tau(n)-2}}\right)}]\\
 &= K_{\left(n-\phi(n)-\sum\limits_{i=1}^k\phi\left( \frac{n}{p_i}\right)-1\right)} \vee K_m[\overline{K}_{\phi\left(\frac{n}{p_1}\right)}, \overline{K}_{\phi\left(\frac{n}{p_2}\right)},\ldots, \overline{K}_{\phi\left(\frac{n}{p_m}\right)}].
     \end{align*}
     where $K_m$ is complete graph on the vertices $ p_1,p_2,\ldots,p_m$.

Let $\{v_1,v_2,\ldots,v_{\left(n-\phi(n)-\sum\limits_{i=1}^m\phi \left(\frac{n}{p_i}\right)-1\right)}, u_{11},  u_{12},\ldots, u_{1\phi\left(\frac{n}{p_1}\right)}, u_{21},u_{22},\ldots, u_{2\phi\left(\frac{n}{p_2}\right)},\ldots, u_{m1},u_{m2},\ldots, u_{m\phi\left(\frac{n}{p_m}\right)}\}$ be the vertex labeling of $W\Gamma(\mathbb{Z}_n)$, where $v_i$’s are vertices of 
$K_{\left(n-\phi(n)-\sum\limits_{i=1}^m\phi \left(\frac{n}{p_i}\right)-1\right)}$, $u_{ij}$
are the vertices of $\overline{K}_{\phi\left(\frac{n}{p_j}\right)}$ for $i= 1,2,\ldots,m$ and $j= 1,2,\ldots,m$.
Observe that $\frak{d}(v_i)= n-\phi(n)-2$. Since $K_m$  is complete graph on the vertices $p_1,p_2,\ldots,p_m$, it follows that $\frak{d}(u_{ij})= n-\phi(n)-\sum\limits_{i=1}^m\phi \left(\frac{n}{p_i}\right)-1+ \sum\limits_{i\neq j=1}^m\phi \left(\frac{n}{p_j}\right)=n-\phi(n)-\phi\left(\frac{n}{p_i}\right)-1$. 
Therefore, the Sombor index of $G$ is
\begin{align*}
    SO(W\Gamma(\mathbb{Z}_n))&=\sum \limits_{v_i\sim v_{j}}\sqrt{{\frak{d}_{v_i}}^2+{\frak{d}_{v_j}}^2}+\sum \limits_{v_i\sim u_{kl}}\sqrt{{\frak{d}_{v_i}}^2+{\frak{d}_{u_{ik}}}^2} +\sum \limits_{u_{ij}\sim u_{kl}}\sqrt{{\frak{d}_{u_{ik}}}^2+{\frak{d}_{v_{jl}}}^2}\\&= \frac{1}{2}\left(n-\phi(n)-\sum\limits_{i=1}^m\phi \left(\frac{n}{p_i}\right)-1\right) \left(n-\phi(n)-\sum\limits_{i=1}^m\phi \left(\frac{n}{p_i}\right)-2\right)\sqrt{(n-\phi(n)-2)^2+(n-\phi(n)-2)^2}  \\& + \left(n-\phi(n)-\sum\limits_{i=1}^m\phi \left(\frac{n}{p_i}\right)-1\right)\sum\limits_{i=1}^m\phi \left(\frac{n}{p_i}\right)\sqrt{(n-\phi(n)-2)^2+ \left(n-\phi(n)-\phi\left(\frac{n}{p_i}\right)-1\right)^2} \\&+ \sum\limits_{i=1}^m\phi \left(\frac{n}{p_i}\right) \sum\limits_{i\neq j=1}^m\phi \left(\frac{n}{p_j}\right) \sqrt{\left(n-\phi(n)-\phi\left(\frac{n}{p_i}\right)-1\right)^2+\left(n-\phi(n)-\phi\left(\frac{n}{p_j}\right)-1\right)^2}
    \\&=\frac{(n-\phi(n)-2)}{\sqrt{2}} \left(n-\phi(n)-\sum\limits_{i=1}^m\phi \left(\frac{n}{p_i}\right)-1\right) \left(n-\phi(n)-\sum\limits_{i=1}^m\phi \left(\frac{n}{p_i}\right)-2\right)  \\& + \left(n-\phi(n)-\sum\limits_{i=1}^m\phi \left(\frac{n}{p_i}\right)-1\right)\sum\limits_{i=1}^m\phi \left(\frac{n}{p_i}\right)\sqrt{(n-\phi(n)-2)^2+ \left(n-\phi(n)-\phi\left(\frac{n}{p_i}\right)-1\right)^2} \\& + \sum\limits_{i=1}^m\phi \left(\frac{n}{p_i}\right) \sum\limits_{i\neq j=1}^m\phi \left(\frac{n}{p_j}\right) \sqrt{\left(n-\phi(n)-\phi\left(\frac{n}{p_i}\right)-1\right)^2+\left(n-\phi(n)-\phi\left(\frac{n}{p_j}\right)-1\right)^2}.
     \end{align*}
    \end{itemize} \end{proof}
Next, we discuss the Sombor index of $W\Gamma(\mathbb{Z}_n)$ for some special values of $n$.
\begin{corollary}
    Let  $W\Gamma(\mathbb{Z}_n)$ be the weakly zero-divisor graph of $\mathbb{Z}_n$, $n \geq 2$. Then the following holds:
\begin{itemize}
     \item[(i)] For $n=p^{k}q$, where $p,q$ are primes and $k\geq1$, the Sombor index of  $W\Gamma(\mathbb{Z}_n)$ is
\begin{align*}
SO(W\Gamma(\mathbb{Z}_n))&=\frac{1}{\sqrt{2}}(p^{k-1}q-1)(p^{k-1}q-2)(p^{k-1}(p+q-1)-2)^2+(p^{k-1}q-1)p^{k-1}(p-1)\\&\sqrt{(p^{k-1}(p+q-1)-2)^2+(p^{k-1}q-1)^2}.
\end{align*}
   \item[(ii)]  For $n=pqr$,  where $p,q$ and $r$ are primes, the Sombor index of  $W\Gamma(\mathbb{Z}_n)$ is
   \begin{align*}
SO(W\Gamma(\mathbb{Z}_n))&=\frac{1}{\sqrt{2}}(p+q+r-3)((p+q+r-4)(p(q-1)+q(r-1)+r(p-1))-4)\\&+(p+q+r-3)(q-1)(r-1)\sqrt{(p(q-1)+q(r-1)+r(p-1))-4)^2+(pq+pr-p-1)^2} \\&+(p+q+r-3)(p-1)(r-1) \sqrt{(p(q-1)+q(r-1)+r(p-1))-4)^2+(pq+qr-q-1)^2}\\&+(p+q+r-3)(p-1)(q-1)\sqrt{(p(q-1)+q(r-1)+r(p-1))-4)^2+(pr+qr-r-1)^2}.
\end{align*}
 \end{itemize}
\begin{proof} 
\begin{itemize}
   \item[(i)]  For $n=p^kq $, the weakly zero-divisor graph of $\mathbb{Z}_n$ can be constructed as follows.
The proper divisors of $n$ are $p,p^2,\ldots, p^k, pq,p^2q,\ldots,p^{k-1}q,q$.  By Lemma \ref{inducedsubgraphequaltogamma}, we have   
 \[W\Gamma(\mathbb{Z}_n) = \Upsilon_{p^kq} [W\Gamma(\mathcal{A}_{p}), W\Gamma (\mathcal{A}_{p^2}), \cdots, W\Gamma(\mathcal{A}_{p^k}),  W\Gamma(\mathcal{A}_{pq}), W\Gamma (\mathcal{A}_{p^2q}), \cdots, W\Gamma(\mathcal{A}_{p^(k-1)q}), W\Gamma(\mathcal{A}_{q})].\]
       Note that by Lemma \ref{adjacenyofjoin}, for $1\leq i\leq k$, $W\Gamma(A_{p^i})=K_{\phi(p^{k-i}q)}$, $W\Gamma(A_{p^jq})=K_{\phi(p^{k-j})}$ for $1\leq j\leq k-1$ and $W\Gamma(A_{q})=\overline{K}_{\phi(p^{k})}$. By Lemma \ref{adjacenyofvertex}, $\Upsilon_{p^kq}$ is a complete graph on the set $\{ p,p^2,\ldots p^k, pq,p^2q,\ldots,p^{k-1}q,q\}$ . Consequently,
 \begin{align*}
           W\Gamma(\mathbb{Z}_n)&= K_{2k}[K_{\phi({p^{k-1}q})}, K_{\phi({p^{k-2}q})}, \ldots, K_{\phi({q})}, K_{\phi({p^{k-1}})},K_{\phi({p^{k-2}})},\ldots,K_{\phi({p})},\overline{K}_{\phi(p^{k})} ]
             \\&= K_{\left({\phi(p^{k-1}q)+\phi(p^{k-2}q)\ldots+\phi(pq)+ \phi(q)} +\phi(p^{k-1})+\phi(p^{k-2})+\ldots+\phi(p)\right)}\vee \overline{K}_{\phi(p^k)} \\&= K_{p^{k-1}q-1}\vee \overline{K}_{\phi(p^k)}.
      \end{align*}
      Observe that there are $\frac{1}{2}(p^{k-1}q-1)(p^{k-1}q-2)$ edges in  $K_{p^{k-1}q-1}$ and $\frac{1}{2}(p^{k-1}q-1)(p^{k-1}q-2)\phi(p^k) $  edges between $K_{p^{k-1}q-1}$ and $\overline{K}_{\phi(p^k)}$.
Thus, the Sombor index of $W\Gamma(\mathbb{Z}_n)$ is
\begin{align*}
    SO(W\Gamma(\mathbb{Z}_n)) &= \frac{1}{2}(p^{k-1}q-1)(p^{k-1}q-2)\sqrt{(p^{k-1}(p+q-1)-2)^2+(p^{k-1}(p+q-1)-2)^2}+ (p^{k-1}q-1)\phi(p^k)\\&\sqrt{(p^{k-1}(p+q-1)-2)^2+(p^{k-1}q-1)^2}\\&= \frac{1}{\sqrt{2}}(p^{k-1}q-1)(p^{k-1}q-2)(p^{k-1}(p+q-1)-2)+(p^{k-1}q-1)p^{k-1}(p-1)\\&\sqrt{(p^{k-1}(p+q-1)-2)^2+(p^{k-1}q-1)^2}.
\end{align*}
\item[(ii)] Let $n=pqr $,  where $p,q$ and $r$ are prime. Then by Lemma \ref{adjacenyofvertexinjoin}, the weakly zero-divisor graph of $\mathbb{Z}_n$ is given by
\[W\Gamma(\mathbb{Z}_n) = \Upsilon_{pqr} [W\Gamma(\mathcal{A}_{p}), W\Gamma (\mathcal{A}_{q}), W\Gamma(\mathcal{A}_{r}),  W\Gamma(\mathcal{A}_{pq}), W\Gamma (\mathcal{A}_{pr}), W\Gamma(\mathcal{A}_{qr})].\] 
By Lemma \ref{adjacenyofjoin}, $W\Gamma (\mathcal{A}_{p})=\overline{K}_{\phi(\frac{n}{p})}$, $W\Gamma (\mathcal{A}_{q})=\overline{K}_{\phi(\frac{n}{q})}$, $W\Gamma (\mathcal{A}_{r})=\overline{K}_{\phi(\frac{n}{r})}$, $W\Gamma (\mathcal{A}_{pq})={K}_{\phi(\frac{n}{pq})}$, $W\Gamma (\mathcal{A}_{pr})={K}_{\phi(\frac{n}{pr})}$, $W\Gamma (\mathcal{A}_{qr})={K}_{\phi(\frac{n}{qr})}$, $|A_{p}|=\phi(\frac{n}{p})$, $|A_{q}|=\phi(\frac{n}{q})$, $|A_{r}|=\phi(\frac{n}{p})$, $|A_{pq}|=\phi(\frac{n}{pq})$, $|A_{pr}|=\phi(\frac{n}{pr})$, $|A_{qr}|=\phi(\frac{n}{qr})$ and  by Lemma \ref{adjacenyofvertex}, the graph $\Upsilon_{pqr}$ is a complete on the  set $\{p,q,r,pq,pr,qr\}$. It follows that
\begin{align*}
    W\Gamma(\mathbb{Z}_n) &= K_6 [\overline{K}_{\phi(\frac{n}{p})},\overline{K}_{\phi(\frac{n}{q})}), \overline{K}_{\phi(\frac{n}{r})}, {K}_{\phi(\frac{n}{pq})}, {K}_{\phi(\frac{n}{pr})}), {K}_{\phi(\frac{n}{qr})}]\\&= K_{\phi(\frac{n}{pq})+\phi(\frac{n}{pr})+\phi(\frac{n}{qr})}\vee K_3[\overline{K}_{\phi(\frac{n}{p})},\overline{K}_{\phi(\frac{n}{q})}), \overline{K}_{\phi(\frac{n}{r})}]\\&=K_{p+q+r-3}\vee K_3[\overline{K}_{\phi(\frac{n}{p})},\overline{K}_{\phi(\frac{n}{q})}), \overline{K}_{\phi(\frac{n}{r})}].                  
    \end{align*}
 Let $\{v_1,v_2,\ldots,v_{p+q+r-3}, x_1,x_2,\ldots,x_{\phi(\frac{n}{p})},  y_1,y_2,\ldots,y_{\phi(\frac{n}{q})}, z_1,z_2,\ldots,z_{\phi(\frac{n}{r})}\}$ is the vertex labelling of graph $ W\Gamma(\mathbb{Z}_n)$, where $v_i$ are the vertices of $K_{p+q+r-3}$, $x_j$ are the vertices of $\overline{K}_{\phi(\frac{n}{p})}$, $y_k$ are the vertices of $\overline{K}_{\phi(\frac{n}{q})}$ and $z_l$ are the vertices of $\overline{K}_{\phi(\frac{n}{r})}$. Observe that  $\frak{d}_{v_i}=p+q+r-3+\phi(pq)+\phi(pr)+\phi(pq)-1=p(q-1)+q(r-1)+r(p-1)-4$, $\frak{d}_{x_j}=p+q+r-3+\phi(pr)+\phi(pq)= pr+pq-p-1$,  $\frak{d}_{y_k}=p+q+r-3+\phi(qr)+\phi(pq)= pq+qr-q-1$ and $\frak{d}_{z_l}=p+q+r-3+\phi(qr)+\phi(pr)= pr+qr-r-1$. 
   Therefore, the Sombor index of  $W\Gamma(\mathbb{Z}_n)$ is  given by

 \begin{align*}
    SO(W\Gamma(\mathbb{Z}_n)) &=\sum \limits_{v_i\sim v_{j}}\sqrt{\frak{d}_
    {v_i}^2+\frak{d}_{v_{j}}^2}+\sum \limits_{v_i\sim x_{j}}\sqrt{\frak{d}_
    {v_i}^2+\frak{d}_{x_{j}}^2}+ \sum \limits_{v_i\sim y_{k}}\sqrt{\frak{d}_
    {v_i}^2+\frak{d}_{y_{k}}^2} +\sum \limits_{v_i\sim z_{l}}\sqrt{\frak{d}_
    {v_i}^2+\frak{d}_{z_{l}}^2}\\&+ \sum \limits_{x_j\sim y_{k}}\sqrt{\frak{d}_{x_{j}}^2+\frak{d}_{y_{k}}^2}+\sum \limits_{x_j\sim z_{l}}\sqrt{\frak{d}_{x_{j}}^2+\frak{d}_{z_{l}}^2}+ \sum \limits_{y_k\sim z_{l}}\sqrt{\frak{d}_{y_{k}}^2+\frak{d}_{z_{l}}^2} \\&= \frac{(p+q+r-3)(p+q+r-4)}{2}\sqrt{((p(q-1)+q(r-1)+r(p-1))-4)^2+((p(q-1)+q(r-1)+r(p-1))-4)^2}\\&+ (p+q+r-3)\phi(qr)\sqrt{((p(q-1)+q(r-1)+r(p-1))-4)^2+(pq+pr-p-1)^2}+ (p+q+r-3)\phi(pr)\\& \sqrt{((p(q-1)+q(r-1)+r(p-1))-4)^2+(pq+qr-q-1)^2}+ (p+q+r-3)\phi(pq)\\&\sqrt{((p(q-1)+q(r-1)+r(p-1))-4)^2+(pr+qr-r-1)^2}+\phi(qr)\phi(pr)\\&\sqrt{(pq+pr-p-1)^2+(pq+qr-q-1)^2}+\phi(qr)\phi(pq)\sqrt{(pq+pr-p-1)^2+(pr+qr-r-1)^2}\\&+\phi(pr)\phi(qr)\sqrt{(pq+qr-q-1)^2+(pr+qr-r-1)^2}\\&=\frac{1}{\sqrt{2}}(p+q+r-3)((p+q+r-4)(p(q-1)+q(r-1)+r(p-1))-4)+(p+q+r-3)(q-1)(r-1)\\&\sqrt{((p(q-1)+q(r-1)+r(p-1))-4)^2+(pq+pr-p-1)^2}+(p+q+r-3)(p-1)(r-1)\\& \sqrt{((p(q-1)+q(r-1)+r(p-1))-4)^2+(pq+qr-q-1)^2}+(p+q+r-3)(p-1)(q-1)\\&\sqrt{((p(q-1)+q(r-1)+r(p-1))-4)^2+(pr+qr-r-1)^2}.
 \end{align*}
 \end{itemize}
\end{proof}
\end{corollary}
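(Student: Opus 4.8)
The plan is to specialise the structural description of $W\Gamma(\mathbb{Z}_n)$ supplied by Lemmas~\ref{valueof partition}, \ref{adjacenyofvertex}, \ref{adjacenyofjoin} and~\ref{inducedsubgraphequaltogamma} to the two prescribed factorisations, reduce each graph to an explicit join of a clique with a join of empty graphs, read off the degree of every vertex class, and then evaluate the Sombor sum by splitting the edge set according to the pair of classes an edge joins. Concretely, I would start from the generalised join $W\Gamma(\mathbb{Z}_n)=\Upsilon_n[W\Gamma(\mathcal{A}_{d_1}),\ldots,W\Gamma(\mathcal{A}_{d_k})]$ of Lemma~\ref{inducedsubgraphequaltogamma}. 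By Lemma~\ref{adjacenyofvertex} the base graph $\Upsilon_n$ is complete on the proper divisors, so every complete block, together with all the join edges among the blocks, amalgamates into a single clique whose order equals the total number of vertices $n-\phi(n)-1$ minus the sizes of the empty blocks. By Lemma~\ref{adjacenyofjoin} the empty blocks are exactly those indexed by primes dividing $n$ to the first power. For $n=p^kq$ this leaves $W\Gamma(\mathbb{Z}_n)=K_{p^{k-1}q-1}\vee\overline{K}_{\phi(p^k)}$, and for $n=pqr$ it leaves $W\Gamma(\mathbb{Z}_n)=K_{p+q+r-3}\vee K_3[\overline{K}_{\phi(n/p)},\overline{K}_{\phi(n/q)},\overline{K}_{\phi(n/r)}]$.

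Next I would record the degree of each vertex class directly from this join structure. Since the amalgamated clique is adjacent to the whole graph, each of its vertices has degree $n-\phi(n)-2$, equal to $p^{k-1}(p+q-1)-2$ in part~(i) and to $pq+pr+qr-p-q-r-1$ in part~(ii). A vertex lying in an empty block $\overline{K}_{\phi(n/p_i)}$ is adjacent to the whole clique and to every other empty block but to none of its own class, so its degree is $n-\phi(n)-\phi(n/p_i)-1$; in part~(i) the single empty block gives degree $p^{k-1}q-1$, and in part~(ii) the three empty blocks give $pq+pr-p-1$, $pq+qr-q-1$ and $pr+qr-r-1$.

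With all degrees available, I would partition the edge set into the clique-internal edges, the clique-to-empty-block edges, and (only in part~(ii)) the edges between two distinct empty blocks; there are no edges inside an empty block. The clique on $N$ vertices contributes $\binom{N}{2}$ edges, each of weight $\sqrt{2}$ times the common clique degree, so that the factor $\tfrac12\sqrt{2}=\tfrac1{\sqrt2}$ appears; each clique-to-block family contributes $N\,\phi(n/p_i)$ edges of weight $\sqrt{(n-\phi(n)-2)^2+(n-\phi(n)-\phi(n/p_i)-1)^2}$; and each block-to-block family contributes $\phi(n/p_i)\,\phi(n/p_j)$ edges of the corresponding mixed weight. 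Summing $\sqrt{\frak{d}_u^2+\frak{d}_v^2}$ over these families yields the displayed expressions.

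The decomposition and the edge count are immediate once the block types are identified, so the only genuine obstacle is the arithmetic simplification of the totient quantities. The points that require care are the collapse of the sum of the complete-block sizes to the clique orders $p^{k-1}q-1$ and $p+q+r-3$, the evaluations $\phi(p^k)=p^{k-1}(p-1)$ and $\phi(n/p)=(q-1)(r-1)$ (and its cyclic variants), and the expansion of the degree expressions into the polynomial forms $p^{k-1}(p+q-1)-2$ and $p(q-1)+q(r-1)+r(p-1)$ that appear under the radicals. Carrying out these reductions, and the final regrouping of the clique term through $\tfrac12\sqrt2=\tfrac1{\sqrt2}$, completes both computations.
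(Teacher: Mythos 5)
Your proposal follows essentially the same route as the paper: specialise the generalised-join decomposition to $K_{p^{k-1}q-1}\vee\overline{K}_{\phi(p^k)}$ and $K_{p+q+r-3}\vee K_3[\overline{K}_{\phi(n/p)},\overline{K}_{\phi(n/q)},\overline{K}_{\phi(n/r)}]$, read off the degree of each vertex class, and sum $\sqrt{\mathfrak{d}_u^2+\mathfrak{d}_v^2}$ over the edge families, so the method is sound and complete. Note, however, that carrying it out faithfully yields expressions differing from the displayed corollary, whose statement contains slips: in (i) the factor $(p^{k-1}(p+q-1)-2)$ should appear to the first power rather than squared, and in (ii) the clique degree is $pq+pr+qr-p-q-r-1$ rather than $p(q-1)+q(r-1)+r(p-1)-4$, while the three block-to-block terms $\phi(n/p_i)\phi(n/p_j)\sqrt{\,\cdot\,}$ that your plan (and the paper's own intermediate step) correctly includes are absent from the paper's final formula.
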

\section{Sombor Spectrum of the Weakly Zero-divisor Graphs of Integer Modulo Ring}
In this section, we first recall some definitions and established results that will be useful for later use. Moreover, we determine the Sombor eigenvalues of the weakly zero-divisor graph $W\Gamma(\mathbb{Z}_n)$.
\begin{lemma}\cite{pirzada2021sombor}\label{eigenvalueofclicque}
    Let $G$ be a connected graph of order $n$ and $S= \{v_1, v_2,\ldots,v_\alpha \}$
be the set of vertices of $G$ satisfying the condition $N(v_i)\setminus S = N(v_j )\setminus S$ for each
$1 \leq i, j \leq {\alpha} $. Then, the following holds.
\begin{itemize}
\item[(i)]  If $S$ is an independent set, then $0$ is the Sombor eigenvalue of $G$ with multiplicity at least $\alpha -1$.
\item[(ii)]  If $S$ is a clique, then $ -d\sqrt{2}$ is the Sombor eigenvalue of $G$ with multiplicity at
least $\alpha -1$, where $d$ is the degree of $v_i$.
\end{itemize}
\end{lemma}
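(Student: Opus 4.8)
The plan is to exhibit, in each case, an explicit $(\alpha-1)$-dimensional space of eigenvectors of the Sombor matrix $S(G)$, all supported on the set $S$. The first step I would take is to record that the hypothesis $N(v_i)\setminus S = N(v_j)\setminus S$ forces every vertex of $S$ to have one and the same degree $d$: outside $S$ the vertices $v_i$ and $v_j$ share identical neighbourhoods, while inside $S$ each vertex is adjacent to none of the others (independent case) or to all $\alpha-1$ others (clique case). This common value is exactly the $d$ appearing in part (ii), and equal degrees are what make the relevant Sombor weights coincide.

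For the eigenvectors I would use the difference vectors $x^{(i)} = e_{v_1} - e_{v_i}$ for $2 \le i \le \alpha$, where $e_v$ denotes the standard basis vector indexed by the vertex $v$. These $\alpha-1$ vectors are linearly independent, so it suffices to show each is an eigenvector with the asserted eigenvalue. To evaluate $S(G)x^{(i)}$ I would read off its coordinate at an arbitrary vertex $w$, which equals $s_{w v_1} - s_{w v_i}$. For $w \notin S$, adjacency to $v_1$ is equivalent to adjacency to $v_i$ by the common external neighbourhood, and $v_1, v_i$ have the same degree $d$; hence $s_{w v_1} = s_{w v_i}$ and this coordinate vanishes.

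The coordinates indexed by $w \in S$ are where the two cases diverge and where essentially all of the (mild) bookkeeping lives. In the independent case every Sombor weight between two vertices of $S$ is $0$, so $s_{w v_1} - s_{w v_i} = 0$ for each $w \in S$ as well; thus $S(G) x^{(i)} = 0$, and $0$ is a Sombor eigenvalue of multiplicity at least $\alpha-1$, proving (i). In the clique case each unordered pair of distinct vertices of $S$ contributes the weight $\sqrt{d^2 + d^2} = d\sqrt{2}$, while every diagonal entry is $0$; hence for $w \in S\setminus\{v_1,v_i\}$ one gets $d\sqrt 2 - d\sqrt 2 = 0$, whereas the coordinates at $w = v_1$ and $w = v_i$ equal $0 - d\sqrt 2 = -d\sqrt 2$ and $d\sqrt 2 - 0 = d\sqrt 2$. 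Since the $v_1$- and $v_i$-coordinates of $-d\sqrt2\, x^{(i)}$ are likewise $-d\sqrt2$ and $d\sqrt2$, this yields $S(G) x^{(i)} = -d\sqrt2\, x^{(i)}$, so $-d\sqrt2$ is a Sombor eigenvalue of multiplicity at least $\alpha-1$, proving (ii). The one point requiring attention is the vanishing of the diagonal Sombor entries at $w = v_1$ and $w = v_i$: this is exactly what shifts the eigenvalue from $0$ to $-d\sqrt2$ in the clique case, and the connectivity hypothesis is not actually needed.
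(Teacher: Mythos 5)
Your argument is correct: the paper itself gives no proof of this lemma (it is quoted from the cited source \cite{pirzada2021sombor}), and your difference-vector computation with $e_{v_1}-e_{v_i}$ is exactly the standard argument used there, including the key observations that the hypothesis forces a common degree on $S$ and that the vanishing diagonal entries produce the shift to $-d\sqrt{2}$ in the clique case. Your remark that connectivity is not needed is also accurate.
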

Consider an n × n matrix
\[M =  
 \displaystyle \begin{bmatrix}
 
A_{1,1} &A_{1,2} &\cdots& A_{1,s-1}& A_{1,s}\\
A_{2,1} & A_{2,2} &\cdots&  A_{2,s-1} & A_{2,s}\\
\vdots & \vdots &  \vdots & \vdots & \vdots &  \\
\vdots & \vdots &  \vdots & \vdots & \vdots &  \\
\vdots & \vdots &  \vdots & \vdots & \vdots &  \\ 
A_{s-1,1} & A_{s-1,2} &\cdots& A_{s-1,s-1}& A_{s-1,s}\\
A_{s,1} &A_{s,2} &\cdots& A_{s,s-1} &A_{s,s}\\
\end{bmatrix},\]\\
whose rows and columns are partitioned according to a partition $P =
\{P_1, P_2,\ldots,P_s\}$ of the set $X = \{1, 2,\ldots,n\}$. The quotient matrix $Q = (q_{ij} )$ (see [5]) is an $s \times s$ matrix, where $q_{ij}$th entry is the average row (column) sum of the
block $A_{ij}$ of $M$. The partition $P$ is said to be equitable if row (column) sum of each
block $A_{i,j}$ is some constant and in such case $Q$ is known as the equitable quotient
matrix. The next result gives a relation between the eigenvalues of $M$ and the eigenvalues of $Q$.
\begin{theorem}\cite{brouwer2011spectra}
     Let $M$ be a matrix of order $n$ and $Q$ be its quotient matrix.
Then the following hold.
\begin{itemize}
    \item[(i)]  If the partition $P$ of $X$ of matrix $M$ is not equitable, then the eigenvalues of $Q$ interlace the eigenvalues of $M$.
\item[(ii)] If the partition $P$ of $X$ of matrix $M$ is equitable, then each of the eigenvalue of
$Q$ is the eigenvalue of $M$.
\end{itemize}
\end{theorem}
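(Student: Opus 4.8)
The plan is to realize the quotient matrix $Q$ as a compression of $M$ to a distinguished $s$-dimensional subspace and then invoke Cauchy interlacing; throughout I treat $M$ as real symmetric, which is the only case in which the eigenvalue statements make sense and is exactly the situation for the Sombor matrix. First I would encode the partition $P=\{P_1,\dots,P_s\}$ by its \emph{characteristic matrix} $S$, the $n\times s$ matrix whose $(k,i)$ entry is $1$ if $k\in P_i$ and $0$ otherwise. Its columns are the indicator vectors of the blocks, so they are nonzero and pairwise orthogonal; in particular $S$ has full column rank and $S^{\top}S=\Delta:=\mathrm{diag}(|P_1|,\dots,|P_s|)$. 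A direct computation shows that the $(i,j)$ entry of $S^{\top}MS$ is the total sum of the block $A_{i,j}$, so that $Q=\Delta^{-1}S^{\top}MS$ recovers exactly the average row sums defining the quotient matrix.

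Next I would symmetrize $Q$ by a diagonal conjugation. Setting $B:=\Delta^{-1/2}S^{\top}MS\Delta^{-1/2}$, one checks $B=\Delta^{1/2}Q\Delta^{-1/2}$, so $B$ and $Q$ are similar and hence share the same (necessarily real) eigenvalues; moreover $B$ is symmetric because $M$ is. Writing $U:=S\Delta^{-1/2}$, the relation $S^{\top}S=\Delta$ gives $U^{\top}U=I_s$, so $U$ has orthonormal columns and $B=U^{\top}MU$ is precisely the compression of $M$ to the $s$-dimensional subspace spanned by the columns of $U$. Completing $U$ to an orthogonal matrix $[\,U\mid U'\,]$ exhibits $B$ as the leading principal $s\times s$ submatrix of $[\,U\mid U'\,]^{\top}M[\,U\mid U'\,]$, which is orthogonally similar to $M$ and therefore has the same spectrum. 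Cauchy interlacing (the Poincar\'e separation theorem) then yields $\lambda_{i}(M)\ge \lambda_{i}(B)\ge \lambda_{n-s+i}(M)$ for $1\le i\le s$, and since $B$ and $Q$ are isospectral this is exactly the interlacing asserted in (i).

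For (ii) I would argue that equitability is equivalent to the intertwining identity $MS=SQ$. Indeed, the $(k,j)$ entry of $MS$ is the sum of the entries of row $k$ lying in block $P_j$, i.e.\ the row sum of $A_{i_0,j}$ for the block $P_{i_0}$ containing $k$; equitability says this value is constant over $k\in P_{i_0}$ and equals $q_{i_0 j}$, which is precisely the $(k,j)$ entry of $SQ$. Granting $MS=SQ$, any eigenpair $Qv=\mu v$ gives $M(Sv)=S(Qv)=\mu(Sv)$, and $Sv\neq 0$ because $S$ has full column rank; hence every eigenvalue of $Q$ is an eigenvalue of $M$, with $Sv$ a corresponding eigenvector, which is (ii).

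I expect the main obstacle to be presentational rather than conceptual: one must set up the conjugation $Q\sim B=U^{\top}MU$ and the orthonormal frame $U$ carefully so that the abstract interlacing theorem applies verbatim, and one must verify the bookkeeping identity $Q=\Delta^{-1}S^{\top}MS$ (and its equitable sharpening $MS=SQ$) against the row-sum definition of the blocks. Once $U$ is in place, part (i) is immediate from interlacing and part (ii) from the intertwining relation, the two cases being unified by the observation that $MS=SQ$ upgrades the compression $B$ from a source of Rayleigh-quotient bounds to a carrier of genuine eigenvectors of $M$.
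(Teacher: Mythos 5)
Your proof is correct. Note, however, that the paper offers no proof of this statement at all: it is quoted verbatim from Brouwer and Haemers' \emph{Spectra of Graphs} and used as a black box, so there is nothing internal to compare your argument against. What you have written is essentially the standard proof from that reference: realizing $Q$ as $\Delta^{-1}S^{\top}MS$ via the characteristic matrix $S$, conjugating to the symmetric compression $B=U^{\top}MU$ with $U=S\Delta^{-1/2}$ orthonormal, applying Cauchy/Poincar\'e interlacing for part (i), and using the intertwining relation $MS=SQ$ (equivalent to equitability) to transport eigenvectors for part (ii). Your explicit restriction to symmetric $M$ is the right call, since the theorem is false or meaningless without it and the Sombor matrix is symmetric; you also correctly observe (implicitly) that interlacing holds whether or not the partition is equitable, which is slightly stronger than the dichotomy as stated.
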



The following result determines the Sombor spectrum of the weakly zero-divisor graph of $\mathbb{Z}_n$.
 \begin{theorem}\label{somberspectrum}
     Let $W\Gamma(\mathbb{Z}_n)$ be a weakly zero-divisor graph of order $n \geq 2$. Then the following holds:
 \begin{itemize}
     \item[(i)] If $n={p_1}^{n_1}{p_2}^{n_2}{p_3}^{n_3}\cdots{p_k}^{n_k}$, where $n_i\geq2$, then the Sombor spectrum of $W(\Gamma(\mathbb{Z}_n))$ is 
    \begin{center}

$\left\{(-\sqrt{2}(n-\phi(n)-2))^{[n-\phi(n)-2]}, \sqrt{2}(n-\phi(n)-1)^2\right\}$.
\end{center} 
\item[(ii)]  If $n = p_1p_2\cdots p_m{q_1}^{k_1}{q_2}^{k_2}\cdots{q_r}^{k_r} $, where $k_i\geq 2,m\geq1$ and $r+m\geq2$, then the Sombor spectrum of $W\Gamma(\mathbb{Z}_n)$ consists of the eigenvalues $-\sqrt{2}(n-\phi(n)-2)$ with multiplicity $(n-\phi(n)-\sum\limits_{i=1}^m\phi (\frac{n}{p_i})-2)$, the eigenvalue $0$ with multiplicity $\sum\limits_{i=1}^m\phi(\frac{n}{p_i})-m$, and the eigenvalues of the following equitable quotient matrix:
\begin{equation}
 \displaystyle \begin{bmatrix}
c_1 &  b_1 \phi(\frac{n}{p_1}) & b_2  \phi(\frac{n}{p_2})&\cdots &
\cdots & b_{m-1}  \phi(\frac{n}{p_{m-1}}) &  \phi(\frac{n}{p_m})\\
 b_1 k & 0 & \chi_{12} &\cdots &\cdots& \chi_{1{m-1}}& \chi_{1m}\\
b_2  k & \chi_{21} & 0&\cdots &\cdots&\chi_{2{m-1}}& \chi_{2m}\\
\vdots&\vdots&\vdots&\vdots&\vdots&\vdots&\vdots\\
\vdots&\vdots&\vdots&\vdots&\vdots&\vdots&\vdots\\
b_{m-1} k& \chi_{{m-1}1}& \chi_{{m-1}2}&\cdots&\cdots&  0 & \chi_{{m-1}{m}}\\
b_{m} k& A_{{m}1}& A_{{m}2}&\cdots&\cdots& A_{{m}{m-1}} &0_{\phi(\frac{n}{p_{m}})\times\phi(\frac{n}{p_m})} \\
\end{bmatrix},
\end{equation}
 where $c_1=\sqrt{2}(k-1)(n-\phi(n)-2)$, $\chi_{ij}=\phi(\frac{n}{p_j})\sqrt{(n-\phi(n)-\phi(\frac{n}{p_i})-1)^2+(n-\phi(n)-\phi(\frac{n}{p_j})-1)^2}$,  $k=n-\phi(n)-\sum\limits_{i=1}^k\phi (\frac{n}{p_i})-1$ and $b_i=\sqrt{(n-\phi(n)-2)^2+(n-\phi(n)-\phi(\frac{n}{p_j})-1)^2}.$
\end{itemize} 
\end{theorem}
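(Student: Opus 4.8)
The plan is to handle the two cases separately, in each reducing the Sombor spectrum to the block structure of $W\Gamma(\mathbb{Z}_n)$ supplied by Lemma \ref{inducedsubgraphequaltogamma}. For part (i), $W\Gamma(\mathbb{Z}_n)$ is the complete graph on $N := n-\phi(n)-1$ vertices, so every vertex has degree $N-1 = n-\phi(n)-2$ and the Sombor matrix is the scalar multiple $S = \sqrt{2}\,(n-\phi(n)-2)\,(J-I)$ of the adjacency matrix $J-I$ of $K_N$. Since the eigenvalues of $J-I$ are $N-1$ (simple) and $-1$ (multiplicity $N-1$), scaling by $\sqrt{2}(N-1)$ yields the simple eigenvalue $\sqrt{2}(N-1)^2$ together with $-\sqrt{2}(N-1)$ of multiplicity $N-1 = n-\phi(n)-2$, which is the spectrum claimed in (i); equivalently one may invoke Lemma \ref{eigenvalueofclicque}(ii) taking the whole vertex set as a clique.

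For part (ii), I would first record the join decomposition $W\Gamma(\mathbb{Z}_n) = K_k \vee K_m[\overline{K}_{\phi(n/p_1)},\ldots,\overline{K}_{\phi(n/p_m)}]$ coming from Lemmas \ref{adjacenyofjoin} and \ref{inducedsubgraphequaltogamma}, where the $m$ null blocks are exactly those indexed by the primes $p_1,\ldots,p_m$ and all remaining (complete) blocks merge into a single clique $K_k$ with $k = n-\phi(n)-\sum_{i=1}^m\phi(n/p_i)-1$. Writing $D := n-\phi(n)-2$ for the degree of a vertex of $K_k$ and $D_i := n-\phi(n)-\phi(n/p_i)-1$ for the degree of a vertex of the $i$-th null block, all entries of $S$ are determined: $\sqrt{2}D$ inside $K_k$, $b_i=\sqrt{D^2+D_i^2}$ between $K_k$ and the $i$-th null block, $0$ inside each null block, and $\sqrt{D_i^2+D_j^2}$ between distinct null blocks.

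I would then extract the repeated eigenvalues through Lemma \ref{eigenvalueofclicque}. Applying part (ii) to the clique $K_k$, whose vertices all share the same neighbourhood outside $K_k$, gives $-\sqrt{2}D = -\sqrt{2}(n-\phi(n)-2)$ with multiplicity at least $k-1 = n-\phi(n)-\sum_{i=1}^m\phi(n/p_i)-2$; applying part (i) to each independent block $\overline{K}_{\phi(n/p_i)}$ gives the eigenvalue $0$ with total multiplicity at least $\sum_{i=1}^m(\phi(n/p_i)-1) = \sum_{i=1}^m\phi(n/p_i)-m$. Next I would verify that the partition $\{P_0,P_1,\ldots,P_m\}$ into the clique block and the $m$ null blocks is equitable by checking that every block row sum is constant; this reproduces precisely the displayed quotient matrix $Q$ with entries $c_1$, $b_i\phi(n/p_i)$, $b_ik$, and $\chi_{ij}$, so by the quotient-matrix theorem of \cite{brouwer2011spectra} each of the $m+1$ eigenvalues of $Q$ is an eigenvalue of $S$.

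The final and most delicate step is to confirm that these three families account for the whole spectrum with no double counting. The dimension count $(k-1) + (\sum_{i=1}^m\phi(n/p_i)-m) + (m+1) = n-\phi(n)-1$ matches the order of $W\Gamma(\mathbb{Z}_n)$ exactly, which forces the ``at least'' multiplicities from Lemma \ref{eigenvalueofclicque} to be exact. The cleanest way to rule out overlap is to observe that the eigenvectors produced by Lemma \ref{eigenvalueofclicque} are supported on a single block with vanishing block-sum, whereas the quotient-matrix eigenvectors are constant on each block; the two families are mutually orthogonal and together span $\mathbb{R}^{\,n-\phi(n)-1}$. I expect this bookkeeping—rather than any hard computation—to be the main obstacle, since one must argue that the quotient eigenvalues are genuinely new contributions even when they happen to coincide numerically with $0$ or $-\sqrt{2}(n-\phi(n)-2)$.
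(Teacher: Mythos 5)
Your proposal follows essentially the same route as the paper: part (i) by direct computation on the complete graph, and part (ii) via the join decomposition $K_k \vee K_m[\overline{K}_{\phi(n/p_1)},\ldots,\overline{K}_{\phi(n/p_m)}]$, Lemma \ref{eigenvalueofclicque} for the repeated eigenvalues $0$ and $-\sqrt{2}(n-\phi(n)-2)$, and the equitable quotient matrix for the remaining $m+1$ eigenvalues; your added care with the dimension count and the orthogonality of the two eigenvector families is a refinement the paper omits but not a different method. One small caveat: your (correct) computation in part (i) yields the simple eigenvalue $\sqrt{2}(n-\phi(n)-2)^2$, which does not literally match the theorem's stated $\sqrt{2}(n-\phi(n)-1)^2$ --- that discrepancy lies in the statement (and in the paper's own proof), not in your argument.
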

\begin{proof}
\begin{itemize}
    \item[(i)] Let $n={p_1}^{n_1}{p_2}^{n_2}{p_3}^{n_3}\cdots{p_k}^{n_k}$, where $n_i\geq2$. Then the weakly zero-divisor graph is complete. Therefore, the only Sombor eigen values are $-\sqrt{2}(n-\phi(n)-2)$ with multiplicity $(n-\phi(n)-2)$ and $\sqrt{2}(n-\phi(n)-1)^2$ with multiplicity $1$. 
\item[(ii)]  Let $n = p_1p_2\cdots p_k{q_1}^{k_1}{q_2}^{k_2}\cdots{q_r}^{k_r} $, where $k_i\geq 2$, $k\geq1$ and $r+m\geq2$. Let $D=\{d_1,d_2,\ldots, d_{\tau(n)-2}\}$ be the set of proper divisor of $n$. Thus, by  Lemma \ref{inducedsubgraphequaltogamma}, the weakly zero-divisor graph $W\Gamma(\mathbb{Z}_n)$  is given by
        \[ W\Gamma(\mathbb{Z}_n) = \Upsilon_n [W\Gamma(\mathcal{A}_{d_1}), W\Gamma (\mathcal{A}_{d_2}), \ldots, W\Gamma(\mathcal{A}_{d_m}),  W\Gamma(\mathcal{A}_{d_{m+1}}), W\Gamma (\mathcal{A}_{d_{m+2}}), \ldots, W\Gamma(\mathcal{A}_{d_{\tau(n)-2}})].\]
 By Lemma \ref{adjacenyofvertex}, the graph $\Upsilon_{n}$ is complete on the vertex set $D$. Now, let $A=\{p_1,p_2,\ldots p_m\}$, then by Lemma \ref{adjacenyofjoin}, for each $d_i\in A$, we have $W\Gamma(\mathcal{A}_{d_{i}}) = \overline{K}_{\phi \left(\frac{n}{d_i}\right)}$ and $W\Gamma(\mathcal{A}_{d_j}) =  {K}_{\phi\left(\frac{n}{d_j}\right)}$, where $d_j\notin A$.
   In the view of Lemma \ref{adjacenyofjoin} and Corollary \ref{partitionofcozerodivisorgraphisomorphic}, we get       
 \[W\Gamma(\mathbb{Z}_n)= K_{\left(n-\phi(n)-\sum\limits_{i=1}^k\phi\left( \frac{n}{p_i}\right)-1\right)} \vee K_m[\overline{K}_{\phi(\frac{n}{p_1})}, \overline{K}_{\phi(\frac{n}{p_2})},\ldots, \overline{K}_{\phi(\frac{n}{p_k})}].\] 
 Let $\{ v_1, v_2, \ldots, v_{\left(n-\phi(n) - \sum\limits_{i=1}^{m} \phi \left( \frac{n}{p_i} \right)
 -1\right)}, u_{11}, u_{12}, \ldots, u_{1\phi \left( \frac{n}{p_1} \right)}, u_{21}, u_{22}, \ldots, u_{2\phi \left( \frac{n}{p_2} \right)}, \ldots, u_{m1}, u_{m2}, \ldots, u_{m\phi \left( \frac{n}{p_m} \right)} \}$ be the vertex labeling of $W\Gamma(\mathbb{Z}_n)$, where $v_i$’s are vertices of 
$K_{\left(n-\phi(n)-\sum\limits_{i=1}^k\phi( \frac{n}{p_i})\right)}$, $u_{ij}$ 
are the vertices of $\overline{K}_{\phi\left(\frac{n}{p_j}\right)}$ for $i= 1,2,\ldots,m$ and $j= 1,2,\ldots,m$.
Note that $\frak{d}_{v_1} = \frak{d}_{v_1} = \cdots = \frak{d}_{v_{\left(n-\phi(n)-\sum\limits_{i=1}^m\phi (\frac{n}{p_i})\right)}}= n-\phi(n)-\sum\limits_{i=1}^m\phi (\frac{n}{p_i})-2-\sum\limits_{i=1}^m\phi (\frac{n}{p_i})=n-\phi(n)-2$, $\frak{d}_{u_{11}} =\frak{d}_{u_{12}}=\cdots=\frak{d}_{u_{1\phi(\frac{n}{p_1})}}= n-\phi(n)-\sum\limits_{i=1}^m\phi (\frac{n}{p_i})-1+\sum\limits_{i=1}^m\phi (\frac{n}{p_i})-\phi(\frac{n}{p_1})=n-\phi(n)-\phi(\frac{n}{p_1})-1$, $\frak{d}_{u_{21}}=\frak{d}_{u_{22}}=\cdots=\frak{d}_{u_{2\phi(\frac{n}{p_2})}}=n-\phi(n)-\sum\limits_{i=1}^m\phi (\frac{n}{p_i})-1+\sum\limits_{i=1}^m\phi (\frac{n}{p_i})-\phi(\frac{n}{p_2})=n-\phi(n)-\phi(\frac{n}{p_2})-1$, \ldots, $\frak{d}_{u_{m1}} =\frak{d}_{u_{m2}}=\cdots=\frak{d}_{u_{m\phi(\frac{n}{p_2})}}=n-\phi(n)-\sum\limits_{i=1}^m\phi (\frac{n}{p_i})-1+\sum\limits_{i=1}^m\phi (\frac{n}{p_i})-\phi(\frac{n}{p_m})=n-\phi(n)-\phi(\frac{n}{p_m})-1$. With this labeling, the Sombor matrix of $W\Gamma(\mathbb{Z}_n)$ is given by
\begin{equation}
 \displaystyle \begin{bmatrix}
C_{k} &  a_1 \mathcal{J}_{k\times \phi(\frac{n}{p_1})} & a_2 \mathcal{J}_{k\times \phi(\frac{n}{p_2})}&\cdots &
\cdots & a_{m-1} \mathcal{J}_{k\times \phi(\frac{n}{p_{m-1}})} & a_m \mathcal{J}_{k\times \phi(\frac{n}{p_m})}\\
a_1\mathcal{J}_{\phi(\frac{n}{p_{1}})\times k }& 0_{\phi(\frac{n}{p_1})\times \phi(\frac{n}{p_1})}& A_{12} &\cdots &\cdots& A_{1{m-1}}& A_{1m}\\
a_2 \mathcal{J}_{ \phi(\frac{n}{p_2})\times k} & A_{21} & 0_{\phi(\frac{n}{p_2})\times\phi(\frac{n}{p_2})}&\cdots &\cdots&A_{2{m-1}}& A_{2m}\\
\vdots&\vdots&\vdots&\vdots&\vdots&\vdots&\vdots\\
\vdots&\vdots&\vdots&\vdots&\vdots&\vdots&\vdots\\
a_{m-1} \mathcal{J}_{ \phi(\frac{n}{p_{m-1}})\times k}& A_{{m-1}1}& A_{{m-1}2}&\cdots&\cdots&  0_{\phi(\frac{n}{p_{m-1}})\times\phi(\frac{n}{m-1})} & A_{{m-1}{m}}\\
a_{m} \mathcal{J}_{ \phi(\frac{n}{p_{m}})\times k}& A_{{m}1}& A_{{m}2}&\cdots&\cdots& A_{{m}{m-1}} &0_{\phi(\frac{n}{p_{m}})\times\phi(\frac{n}{p_m})} \\
\end{bmatrix},
    \end{equation}
Where $\mathcal{J}$ is a matrix of all ones, $0$ is the zero matrix, $C_k=\sqrt{2}(n-\phi(n)-1)(\mathcal{J}_k-\mathcal{I}_k)$, where $k=n-\phi(n)-\sum\limits_{i=1}^k\phi (\frac{n}{p_i})-1$.  For $1\leq i\leq m$,  $a_i=\sqrt{(n-\phi(n)-2)^2+(n-\phi(n)-\phi(\frac{n}{p_i})-1)^2}$ and $A_{ij}=\sqrt{(n-\phi(n)-\phi(\frac{n}{p_i})-1)^2+(n-\phi(n)-\phi(\frac{n}{p_j})-1)^2} \mathcal{J}_{\phi(\frac{n}{p_i})\times\phi(\frac{n}{p_j})}$.
Since,
each vertex  $v_i’s$ of $K_k$ share the same neighborhood and so by Lemma \ref{eigenvalueofclicque},
$-\sqrt{2}(n-\phi(n)-2)$ is the Sombor eigenvalue of $W\Gamma(\mathbb{Z}_n)$ with multiplicity $(n-\phi(n)-\sum\limits_{i=1}^k\phi (\frac{n}{p_i})-2)$. Also, each of $\overline{K}_{(\frac{n}{p_i}})$ form an independent set and
share a common vertex set in $W\Gamma(\mathbb{Z}_n)$, by Lemma \ref{eigenvalueofclicque}, $0$ is the Sombor eigenvalue of $W\Gamma(\mathbb{Z}_n)$ with multiplicity $\sum\limits_{i=1}^m\phi(\frac{n}{p_i})-m$.
The other $m+1$ Sombor eigenvalues of $W\Gamma(\mathbb{Z}_n)$ are the eigenvalues of the equitable quotient matrix of Matrix (2) and are given in (1).
\end{itemize}
\end{proof}
Next, we discuss the Sombor spectrum of $W\Gamma(\mathbb{Z}_n)$ for some values of $n$.
\begin{corollary}
    Let $\Gamma(\mathbb{Z}_n)$ be the weakly zero-divisor graph of $\mathbb{Z}_n$. Then the following hold.
    \begin{itemize}
    \item[(i)]  If $n = p^k q$, where $p$ and $q$ are prime and $k\geq 2$ is a positive integer, then the Sombor spectrum of $\Gamma(\mathbb{Z}_n)$ is:
\[\left\{0^{[\phi(p^k)-1]}, \, (-\sqrt{2}(p^{k-1}q+\phi(p^k)-1))^{[p^{k-1}q - 1]}, \, \frac{\sqrt{2}(p^{k-1}q + \phi(p^k) - 1)(p^{k-1}q - 2) \pm \sqrt{\lambda}}{2}\right\} \]
where $\lambda= 2(p^{k-1}q + \phi(p^k) - 1)^2(p^{k-1}q - 2)^2 + 4\phi(p^k)(p^{k-1}q - 1)^2(p^{k-1}q + \phi(p^k) - 1)^2$.
 \item[(ii)]  If $n=pqr$, then  the Sombor spectrum of $\Gamma(\mathbb{Z}_n)$  consists of the eigenvalues $-\sqrt{2}(p(q-1)+q(r-1)+r(p-1)-4)$ with multiplicity $p+q+r-4$, the eigenvalue $0$ with multiplicity $(p-1)(q-1)+(p-1)(r-1)+(q-1)(r-1)-4$, and the eigenvalues of the following equitable quotient matrix (3):
 \end{itemize}
 \end{corollary}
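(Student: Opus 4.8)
The plan is to read off both spectra as direct specializations of Theorem \ref{somberspectrum}(ii), since each value of $n$ satisfies its hypotheses. The case $n=p^kq$ with $k\ge 2$ is the instance of a single squarefree prime factor, so that $m=1$ with the role of $p_1$ played by $q$ (and $p$ contributing the higher prime power); the case $n=pqr$ is the instance of three squarefree prime factors and no higher prime powers, so that $m=3$. In each case I would first record the relevant Euler-totient values, then quote from the general theorem the two eigenvalues of large multiplicity together with the small equitable quotient block, and finally simplify.

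For part (i), first I would compute $\phi(n)=p^{k-1}(p-1)(q-1)$ and $\phi(n/q)=\phi(p^k)=p^{k-1}(p-1)$, together with the simplification $n-\phi(n)=p^{k-1}(p+q-1)=p^{k-1}q+\phi(p^k)$. By Lemmas \ref{adjacenyofjoin} and \ref{inducedsubgraphequaltogamma} the graph is the join $K_{p^{k-1}q-1}\vee\overline{K}_{\phi(p^k)}$. Applying Lemma \ref{eigenvalueofclicque}, the clique contributes the eigenvalue $-\sqrt{2}(n-\phi(n)-2)$ and the independent set contributes the eigenvalue $0$, each with multiplicity one less than its block size. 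Since $m=1$, the equitable quotient matrix of Theorem \ref{somberspectrum}(ii) collapses to a $2\times 2$ matrix whose characteristic polynomial is a quadratic; solving it by the quadratic formula yields the remaining pair of eigenvalues in the closed form $\frac{(\mathrm{trace})\pm\sqrt{\lambda}}{2}$. The only real content here is checking that, after the totient substitutions above, the discriminant of that quadratic collapses to the stated expression $\lambda$.

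For part (ii), I would compute $\phi(n)=(p-1)(q-1)(r-1)$ together with the three partial totients $\phi(n/p)=(q-1)(r-1)$, $\phi(n/q)=(p-1)(r-1)$, and $\phi(n/r)=(p-1)(q-1)$. The key arithmetic identity is $n-\phi(n)-\sum_{i}\phi(n/p_i)=p+q+r-2$, which at once delivers the multiplicity $p+q+r-4$ for the eigenvalue $-\sqrt{2}(n-\phi(n)-2)$ and the multiplicity $\sum_i\phi(n/p_i)-3$ for the eigenvalue $0$, exactly as in Theorem \ref{somberspectrum}(ii) with $m=3$. The remaining $m+1=4$ eigenvalues are then precisely the eigenvalues of the $4\times 4$ equitable quotient matrix obtained by substituting $m=3$ and these three totient values into the general matrix (1); writing this out produces the displayed matrix (3).

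I expect the main obstacle to be purely computational rather than conceptual: no new structural idea beyond Theorem \ref{somberspectrum} and Lemma \ref{eigenvalueofclicque} is required. The delicate steps are the bookkeeping of the totient identities, in particular $n-\phi(n)=p^{k-1}q+\phi(p^k)$ in part (i) and $n-\phi(n)-\sum_i\phi(n/p_i)=p+q+r-2$ in part (ii), the correct tracking of the two degree values $n-\phi(n)-2$ and $n-\phi(n)-\phi(n/p_i)-1$ that enter the Sombor weights, and, for part (i), the algebraic reduction of the $2\times 2$ characteristic polynomial into the stated $\lambda$. As a consistency check at the end, I would confirm that the listed multiplicities sum to the order $n-\phi(n)-1$ of $W\Gamma(\mathbb{Z}_n)$.
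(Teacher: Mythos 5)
Your proposal follows essentially the same route as the paper: decompose $W\Gamma(\mathbb{Z}_n)$ as the join $K_{p^{k-1}q-1}\vee\overline{K}_{\phi(p^k)}$ (resp.\ $K_{p+q+r-3}\vee K_3[\overline{K}_{(q-1)(r-1)},\overline{K}_{(p-1)(r-1)},\overline{K}_{(p-1)(q-1)}]$) and specialize Theorem~\ref{somberspectrum}(ii) together with Lemma~\ref{eigenvalueofclicque}, so the method is correct and matches the paper's proof. One caveat: if you carry out your own consistency check you will find that faithful execution gives, in (i), the eigenvalue $-\sqrt{2}(p^{k-1}q+\phi(p^k)-2)$ with multiplicity $p^{k-1}q-2$ (the clique degree is $n-\phi(n)-2$ and the multiplicity is the clique size minus one), and, in (ii), the multiplicity $\sum_i\phi(n/p_i)-3$ for the eigenvalue $0$ and clique degree $p(q-1)+q(r-1)+r(p-1)-1$; the corollary as printed is off by one in these places (its multiplicities do not sum to the order $n-\phi(n)-1$), so your computation would correct the statement rather than reproduce it verbatim.
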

\begin{proof}
\begin{enumerate}
    \item[(i)] Let $p^k q$, where $p$ and $q$ are distinct primes. Then by definition, the weakly zero divisor graph of  $\mathbb{Z}_n$ is given by
       \begin{align*}
           W\Gamma(\mathbb{Z}_n) &= K_{\left({\phi(p^{k-1}q)+\phi(p^{k-2}q)\ldots+\phi(pq)+\phi(q)} +\phi(p^{k-1})+\phi(p^{k-2})+\ldots+\phi(p)\right)}\vee \overline{K}_{\phi(p^k)} \\&= K_{p^{k-1}q-1}\vee \overline{K}_{\phi(p^k)}.
      \end{align*}

Now, by Theorem 3.3, the Sombor spectrum of $W(\Gamma(\mathbb{Z}_n))$ consists of the eigenvalue $-\sqrt{2}(p^{k-1}q+\phi(p^k)-1)$ with multiplicity $p^{k-1}q - 1$, the eigenvalue $0$ with multiplicity $\phi(p^k)-1$, and the eigenvalues of the following equitable quotient matrix:
 \[  \displaystyle \begin{bmatrix}
\sqrt{2}(p^{k-1}q+\phi(p^k)-1)(p^{k-1}q - 2) &  \phi(p^k) \sqrt{({p^{k-1}q+\phi(p^k)-1})^2+{(p^{k-1}q - 1)}^2}   \\(p^{k-1}q - 1)\sqrt{({p^{k-1}q+\phi(p^k)-1})^2 +{(p^{k-1}q - 1)}^2} & 0\\
\end{bmatrix}.\]\\
Observe that the eigenvalues of the above matrix are
\[\frac{\sqrt{2}(p^{k-1}q + \phi(p^k) - 1)(p^{k-1}q - 2) \pm \sqrt{2(p^{k-1}q + \phi(p^k) - 1)^2(p^{k-1}q - 2)^2 + 4\phi(p^k)(p^{k-1}q - 1)^2(p^{k-1}q + \phi(p^k) - 1)^2}}{2}.\]
        \item[(ii)]   For $n=pqr$, where $p,q$ and $r$ are distinct primes, then by definition, the weakly zero divisor graph of  $\mathbb{Z}_n$ is
    \begin{align*}
W\Gamma(\mathbb{Z}_n) &= \Upsilon_{pqr} [W\Gamma(\mathcal{A}_{p}), W\Gamma (\mathcal{A}_{q}), W\Gamma(\mathcal{A}_{r}),  W\Gamma(\mathcal{A}_{pq}), W\Gamma (\mathcal{A}_{pr}), W\Gamma(\mathcal{A}_{qr})\\& = K_6 [\overline{K}_{\phi(\frac{n}{p})},\overline{K}_{\phi(\frac{n}{q})}), \overline{K}_{\phi(\frac{n}{r})}, {K}_{\phi(\frac{n}{pq})}, {K}_{\phi(\frac{n}{pr})}), {K}_{\phi(\frac{n}{qr})}]\\&= K_{\phi(\frac{n}{pq})+\phi(\frac{n}{pr})+\phi(\frac{n}{qr})}\vee K_3[\overline{K}_{\phi(\frac{n}{p})},\overline{K}_{\phi(\frac{n}{q})}), \overline{K}_{\phi(\frac{n}{r})}]\\&=K_{p+q+r-3}\vee K_3[\overline{K}_{\phi(\frac{n}{p})},\overline{K}_{\phi(\frac{n}{q})}), \overline{K}_{\phi(\frac{n}{r})}].                  
 \end{align*}
Using  Theorem \ref{somberspectrum}, the Sombor spectrum of $W(\Gamma(\mathbb{Z}_n))$  consists of the eigenvalue $-\sqrt{2}(p(q-1)+q(r-1)+r(p-1)-4)$ with multiplicity $p+q+r-4$, the eigenvalue $0$ with multiplicity $(p-1)(q-1)+(p-1)(r-1)+(q-1)(r-1)-4$, and the eigenvalues of the following equitable quotient matrix:
\begin{equation}
   \displaystyle \begin{bmatrix}
(p+q+r-4) \sqrt{2}d_1&   (p-1)(q-1)\sqrt{{d_1}^2+{d_2}^2} & (p-1)(r-1)\sqrt{{d_1}^2+{d_3}^2} &(q-1)(r-1) \sqrt{{d_1}^2+{d_4}^2} \\
 (p+q+r-3)\sqrt{{d_1}^2+{d_2}^2} & 0 &  (p-1)(r-1) \sqrt{{d_2}^2+{d_3}^2} &(p-1)(q-1) \sqrt{{d_2}^2+{d_2}^2}  \\ (p+q+r-3)
\sqrt{{d_1}^2+{d_3}^2} & (p-1)(q-1)\sqrt{{d_3}^2+{d_2}^2}  & 0&(q-1)(r-1)\sqrt{{d_3}^2+{d_4}^2} \\
(p+q+r-3)\sqrt{{d_1}^2+{d_4}^2} &(p-1)(q-1)\sqrt{{d_4}^2+{d_2}^2} &(p-1)(r-1)\sqrt{{d_4}^2+{d_3}^2} &0\\
\end{bmatrix},
\end{equation}
where $d_1=p(q-1)+q(r-1)+r(p-1)-4$, $d_2=pq+pr-p-1$,  $d_3=pq+qr-q-1$ and  $d_4=pr+pr-r-1$.
\end{enumerate}
    \end{proof}
Now, we obtain a lower bound on Sombor energy of $ W\Gamma(\mathbb{Z}_n)$.
\begin{theorem}
     Let $n = p_1p_2\ldots p_m{q_1}^{k_1}{q_2}^{k_2}\ldots{q_r}^{k_r} $, where $k_i\geq 2,m\geq1$ and $r+m\geq2$. Then the Sombor energy of the weakly zero-divisor graph $ W\Gamma(\mathbb{Z}_n)$ is given by

\[
ESO(G) \geq \sqrt{2}(n-\phi(n)-2) \left(n-\phi(n)-\sum\limits_{i=1}^m\phi \left(\frac{n}{p_i}\right)-2\right).
\]

\begin{proof}
    
 From Theorem 3.3, $G$ has a nonzero Sombor eigenvalue $-\sqrt{2}(n-\phi(n)-2)$ with multiplicity $(n-\phi(n)-\sum\limits_{i=1}^m\phi\left (\frac{n}{p_i}\right)-2)$, and the other nonzero eigenvalues are the eigenvalues of the matrix in (3.8). Thus, by the definition of Sombor energy, we have

\[
ESO(G) = \sum_{i=1}^{\left(n-\phi(n)-\sum\limits_{i=1}^m\phi \left(\frac{n}{p_i}\right)-2\right)} \left| -\sqrt{2}(n-\phi(n)-2) \right| + E(M)  \geq \sqrt{2}(n-\phi(n)-2)  \left(n-\phi(n)-\sum\limits_{i=1}^m\phi \left(\frac{n}{p_i}\right)-2\right).
\]
\end{proof}
\end{theorem}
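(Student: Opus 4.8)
The plan is to obtain the bound directly from the spectral decomposition supplied by Theorem~\ref{somberspectrum}(ii), combined with the defining formula $ESO(G)=\sum_{i}|\lambda_i|$. First I would record that, under the hypotheses $n=p_1p_2\cdots p_m q_1^{k_1}\cdots q_r^{k_r}$ with $k_i\ge 2$, $m\ge 1$, $r+m\ge 2$, the Sombor spectrum of $W\Gamma(\mathbb{Z}_n)$ consists of exactly three kinds of eigenvalues: the value $-\sqrt{2}(n-\phi(n)-2)$ repeated $n-\phi(n)-\sum_{i=1}^m\phi(n/p_i)-2$ times, the value $0$ repeated $\sum_{i=1}^m\phi(n/p_i)-m$ times, and the $m+1$ eigenvalues of the equitable quotient matrix $M$ appearing in Theorem~\ref{somberspectrum}(ii). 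As a bookkeeping check I would verify that these multiplicities sum to the order $n-\phi(n)-1$ of the graph, confirming that the list of eigenvalues is complete.

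Second, I would split the energy sum along these three groups. The group of zero eigenvalues contributes nothing to $\sum_i|\lambda_i|$. The first group contributes its multiplicity times $\bigl|-\sqrt{2}(n-\phi(n)-2)\bigr|=\sqrt{2}(n-\phi(n)-2)$, which is precisely
\[
\sqrt{2}(n-\phi(n)-2)\left(n-\phi(n)-\sum_{i=1}^m\phi\!\left(\tfrac{n}{p_i}\right)-2\right),
\]
the claimed right-hand side. The remaining contribution is $E(M)=\sum_j|\mu_j|$, taken over the eigenvalues $\mu_j$ of $M$; being a sum of absolute values it is nonnegative, so discarding it preserves the inequality and yields the stated lower bound.

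Since the argument reduces to isolating one highly degenerate eigenvalue and dropping a nonnegative tail, there is no genuine analytic obstacle beyond this bookkeeping. The only points deserving care are to ensure the multiplicity $n-\phi(n)-\sum_{i=1}^m\phi(n/p_i)-2$ is nonnegative, so that the contributing block is actually present and the bound is not vacuous, and to invoke Theorem~\ref{somberspectrum}(ii) in exactly the regime $m\ge 1$, $r+m\ge 2$ for which it is stated; both conditions hold under the present hypotheses.
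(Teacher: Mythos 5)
Your proposal is correct and follows essentially the same route as the paper: isolate the eigenvalue $-\sqrt{2}(n-\phi(n)-2)$ with its multiplicity from Theorem~\ref{somberspectrum}(ii), note that the zero eigenvalues contribute nothing, and drop the nonnegative contribution $E(M)$ of the quotient-matrix eigenvalues. The extra bookkeeping you include (verifying the multiplicities sum to the order of the graph and that the degenerate block is nonempty) is a sensible addition but does not change the argument.
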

\begin{proposition}
Let $W(\Gamma(\mathbb{Z}_n))$ be the weakly zero-divisor graph  of $\mathbb{Z}_n$. Then the following hold,
\begin{itemize}
    \item[(i)] If $n=p^k$, where $p$ is a prime, then the Sombor energy of $W(\Gamma(\mathbb{Z}_{p^{k}}))$ is
    \[
    E_{SO}(G) =  \sqrt{2}(p^{k-1}-2)^2+ \sqrt{2}(p^{k
-1}-1)^2.
    \]
    \item[(ii)] If $n = p^kq$, where $p$ and $q$ are  prime and $k \geq 2$ is a positive integer, then the Sombor spectrum of $W(\Gamma(\mathbb{Z}_n))$ is
    \[
    E_{SO}(\Gamma(\mathbb{Z}_n)) = \sqrt{2}(p^{k-1}q+\phi(p^k)-1) (p^{k-1}q - 1) + \sqrt{\lambda},
    \]
  where  $\lambda= 2(p^{k-1}q + \phi(p^k) - 1)^2(p^{k-1}q - 2)^2 + 4\phi(p^k)(p^{k-1}q - 1)^2(p^{k-1}q + \phi(p^k) - 1)^2$.
   
\end{itemize}
\end{proposition}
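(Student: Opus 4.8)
The plan is, in each case, to read off the complete Sombor spectrum from the results of Section~3 and then sum the moduli of the eigenvalues, since $ESO(G)=\sum_i|\lambda_i|$ by definition. All the real work has already been done in establishing the spectra, so the proof amounts to careful bookkeeping plus one short sign analysis.

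\emph{Part (i).} Here $n=p^k$ with $k\ge 2$, so $W\Gamma(\mathbb{Z}_{p^k})$ is complete and Theorem~\ref{somberspectrum}(i) applies. First I would record the elementary identity $n-\phi(n)=p^k-(p^k-p^{k-1})=p^{k-1}$. Substituting this into the spectrum of Theorem~\ref{somberspectrum}(i) shows that the eigenvalues are $-\sqrt{2}(p^{k-1}-2)$ with multiplicity $p^{k-1}-2$ together with the single eigenvalue $\sqrt{2}(p^{k-1}-1)^2$. The repeated eigenvalue contributes $(p^{k-1}-2)\cdot\sqrt{2}(p^{k-1}-2)=\sqrt{2}(p^{k-1}-2)^2$ to the energy, and the remaining eigenvalue contributes $\sqrt{2}(p^{k-1}-1)^2$; adding these gives the claimed value. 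This part is a direct substitution once the spectrum is in hand.

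\emph{Part (ii).} For $n=p^kq$ with $k\ge 2$ I would invoke the corollary following Theorem~\ref{somberspectrum}, which records the Sombor spectrum of $W\Gamma(\mathbb{Z}_{p^kq})$: the eigenvalue $-\sqrt{2}(p^{k-1}q+\phi(p^k)-1)$ with multiplicity $p^{k-1}q-1$, the eigenvalue $0$ with multiplicity $\phi(p^k)-1$, and the two eigenvalues $\mu_{+},\mu_{-}$ of the $2\times 2$ equitable quotient matrix listed there, whose trace is $a=\sqrt{2}(p^{k-1}q+\phi(p^k)-1)(p^{k-1}q-2)$ and whose off-diagonal entries $b,c$ are strictly positive. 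The zero eigenvalues contribute nothing to the energy, and the repeated negative eigenvalue contributes $(p^{k-1}q-1)\sqrt{2}(p^{k-1}q+\phi(p^k)-1)$.

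The one step requiring a genuine argument is the contribution of $\mu_{+}$ and $\mu_{-}$. Since the $(2,2)$-entry of the quotient matrix is $0$ and its off-diagonal entries satisfy $b,c>0$, its determinant equals $-bc<0$; hence the two eigenvalues are real with opposite signs, say $\mu_{+}>0>\mu_{-}$. Therefore the absolute values collapse cleanly:
\[
|\mu_{+}|+|\mu_{-}|=\mu_{+}-\mu_{-}=\sqrt{(\mu_{+}+\mu_{-})^2-4\mu_{+}\mu_{-}}=\sqrt{a^2+4bc}=\sqrt{\lambda},
\]
the last equality being the defining expression for $\lambda$. Summing the three contributions yields $ESO=\sqrt{2}(p^{k-1}q+\phi(p^k)-1)(p^{k-1}q-1)+\sqrt{\lambda}$, as asserted. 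I expect this sign analysis of $\mu_{\pm}$ (so that the moduli reduce to the clean $\sqrt{\lambda}$) to be the only real, though short, obstacle; everything else follows immediately from the spectra already established in Section~3.
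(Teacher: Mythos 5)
Your proposal matches the paper's own proof: both parts read the spectrum off Theorem~3.3(i) and the corollary for $n=p^kq$, respectively, and then sum the moduli of the eigenvalues exactly as you do. In fact, your explicit sign analysis of the two quotient-matrix eigenvalues in part (ii) (zero $(2,2)$-entry and positive off-diagonal entries give negative determinant, hence eigenvalues of opposite sign, hence $|\mu_{+}|+|\mu_{-}|=\sqrt{\lambda}$) supplies the one justification the paper leaves implicit when it simply writes ``by Corollary 3.4 and the definition of Sombor energy.''
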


\begin{proof}
If $n = p^k$, where $p$ is prime, then $W\Gamma(\mathbb{Z}_n) \cong K_{p^k-\phi(p^k)-1}$,
\begin{align*}
ESO(G) &= \sum_{i=1}^{p^k-\phi(p^k)-2} \left| -\sqrt{2}(n-\phi(p^k)-2) \right| + \left| \sqrt{2}(p^k-\phi(p^k)-1)^2 \right| \\
&= (p^k-\phi(p^k)-2)( \sqrt{2}(p^k-\phi(p^k)-2)+ \sqrt{2}(p^k-\phi(p^k)-1)^2\\ &= \sqrt{2}(p^{k-1}-2)^2+ \sqrt{2}(p^{k
-1}-1)^2
\end{align*}

For $n = p^kq$, where $p$ is prime and $k \geq 2$ is a positive integer, by Corollary 3.4 and the definition of Sombor energy, we have
 
   \[ E_{SO}(\Gamma(\mathbb{Z}_n)) = \sqrt{2}(p^{k-1}q+\phi(p^k)-1) (p^{k-1}q - 1) + \sqrt{\lambda},\]
    where  $\lambda$ is defined above.
\end{proof}

\textbf{Acknowledgement:} The first author gratefully acknowledges Birla Institute of Technology and Science (BITS) Pilani, India, for providing financial support.

\vspace{.3cm}
\textbf{Conflicts of interest/Competing interests}: There is no conflict of interest regarding the publishing of this paper.


\end{document}